\title{Invertibility in partially ordered nonassociative rings}
\author{Nizar El Idrissi* \\
%EndAName
nizar.elidrissi@uit.ac.ma \and Hicham Zoubeir* \\
%EndAName
hzoubeir2014@gmail.com}
\newcommand{\Addresses}{{% additional braces for segregating \footnotesize
  \bigskip
  \footnotesize

  \textbf{Nizar El Idrissi.}
  \par\nopagebreak Laboratoire : Equations aux dérivées partielles, Algèbre et Géométrie spectrales.
  \par\nopagebreak
  Département de mathématiques, faculté des sciences, université Ibn Tofail, 14000 Kénitra.\par\nopagebreak 
  \textit{E-mail address} : \texttt{nizar.elidrissi@uit.ac.ma}

  \medskip

  \textbf{Hicham Zoubeir.} 
  \par\nopagebreak Laboratoire : Equations aux dérivées partielles, Algèbre et Géométrie spectrales.
  \par\nopagebreak
   Département de mathématiques, faculté des sciences, université Ibn Tofail, 14000 Kénitra.
\par\nopagebreak 
  \textit{E-mail address} : \texttt{hzoubeir2014@gmail.com}
}}
\theoremstyle{plain}
\newtheorem{theorem}{Theorem}[section]
\newtheorem{proposition}{Proposition}[section]
\newtheorem{corollary}{Corollary}[section]
\newtheorem{lemma}{Lemma}[section]
\theoremstyle{definition}
\newtheorem{definition}{Definition}[section]
\newtheorem{example}{Example}[section]   %% And a not so common one.
\theoremstyle{remark}
\newtheorem{remark}{Remark}[section]
\def\keywords{\xdef\@thefnmark{}\@footnotetext}
\begin{document}
\newpage
\maketitle

%%%%%%%%%%%%%%%%%%%%%%%%%%%
% abstract, keywords and Subject classification are optional.
%%%%%%%%%%%%%%%%%%%%%%%%%%%

\begin{abstract}
Invertibility is important in ring theory because it enables division and facilitates solving equations. Moreover, (nonassociative) rings can be endowed with an extra ''structure'' such as order and topology allowing more richness in the theory. The two main theorems of this article are contributions to invertibility in the context of partially ordered nonassociative rings \textit{and} Hausdorff sequentially Cauchy-complete weak-quasi-topological nonassociative rings. Specifically, the first theorem asserts that the interval $]0,1]$ in any suitable partially ordered nonassociative ring consists entirely of invertible elements. The second theorem asserts that if $f$ is a suitably generalized concept of seminorm from a nonassociative ring to a partially ordered nonassociative ring endowed with Frink's interval topology, then under certain conditions, the subset of elements such that $f(1-a) < 1$ consists entirely of invertible elements. Part of the assumption of the second theorem is that of Hausdorff sequential Cauchy-completeness of the first ring under the topology induced by the seminorm $f$ (which takes values in a partially ordered nonassociative ring endowed with Frink's interval topology). Frink's interval topology is an example of a coarse locally-convex $T_1$ topology. Moreover, to our knowledge, the topology induced by a seminorm into a partially ordered nonassociative ring has never been introduced. Some additional original facts, such as the fact that the topology on a nonassociative ring $R_1$ induced by a norm into a totally ordered associative division ring $R_2$ endowed with Frink's interval topology (or equivalently, with the order topology, since the order of $R_2$ is total) is a Hausdorff locally convex quasi-topological group with an additional separate continuity property of the product, are dealt with in the second section ''Preliminaries''.
\end{abstract}

% Most people do not use these, so they are "commented out"
% by starting the lines with a "%"

%\begin{keywords}
%\end{keywords}

%\begin{AMS}
%\end{AMS}

\keywords{2020 \emph{Mathematics Subject Classification.} 17A01, 16W80, 06F25, 13J25, 16U60.}
\keywords{\emph{Key words and phrases.} nonassociative ring, invertibility, ordered ring, weak-quasi-topological ring, quasi-topological group, completeness, Hausdorff space, seminorm, Frink's interval topology}
\def\thefootnote{*}\footnotetext{These authors contributed equally to this work.}

%%%%%%%%%%%%%%%%%%%%%%
% % Here is the start of the Text
%%%%%%%%%%%%%%%%%%%%%%

\tableofcontents

\section{Introduction}

Ring theory is a rich and deep mathematical subject \cite{Matsumura,Lam,Schafer}. Rings are, after groups, among the most pervasive structures in algebra, and therefore in mathematics. Like many other mathematical structures, rings have a diverse nature in that they can support different levels of extra ''structure'' (ordered ring, topological ring, algebra, bialgebra, Hopf algebra, etc.) and ''property'' (associativity, commutativity, supercommutativity, etc.). Rings have two operations, a ''+'' and a ''$\times$''. Generally, the second operation is nontrivial. Highly interesting examples of rings have been found (or constructed) in mathematics. Since the second operation is often the most interesting, an important direction in ring theory is what is named ''multiplicative ideal theory'' \cite{Chapman-Fontana-Geroldinger-Olberding}, and in particular, the theory of inverses \cite{Rao}. Invertibility is important in ring theory because it enables division, facilitates the solving of equations, allows the definition of a group of units, and plays a role in ideal theory. The questions that arise in this context are of the following type: 1) how can the inverses be defined? 2) Do inverses exist and are they unique? 3) What is the size of the set of invertible elements? Some definitions and answers are given in \cite{Kaplansky,Kantrowitz-Neumann2017,Drazin1958,Drazin2012}. But while the preceding explanations give an indication of the richness of this field, this article contributes exclusively to the answer to the last two questions, using the usual notion of invertibility, and working in the specific additional context of partially ordered and topological structures. Certainly, topology is a well-established theory. Briefly, it consists of inspecting the nearly geometric properties of sets through the examination of the properties of a collection of their subsets. In his landmark paper \cite{Kaplansky}, Kaplansky showed that rings endowed with a compatible topology enjoy some very rich properties. Since then, many contributions have been made to the theory of topological rings \cite{Warner, Arnautov-Glavatsky-Mikhalev}. Kaplansky also found a result, on which we base our work, stating the invertibility of elements of rings under a condition involving what was previously called a metric, and using an argument involving geometric series. This particular result was developed in \cite{Garcia-Pacheco-Miralles-Murillo-Arcila, Kantrowitz-Neumann2017, Kantrowitz-Neumann2015, Propp, Clark-Diepeveen}, but is still leaving room for improvement. On the other hand, order theory is a powerful theory that uses the notion of an ''order relation'' (a binary relation) to explore the properties of sets. This subject started to be mainstream in the '40's, with the book of Birkhoff \cite{Birkhoff} serving as an important foundation and source of future investigations. The book has been modernized several times, and many important notions from order theory were gradually discussed in this book, such as lattices and topologies on partially ordered sets (posets). However, one of the first topologies to be introduced on posets, the interval topology, was by Frink \cite{Frink1942}. Soon afterwards, Birkhoff, Frink and McShane \cite{Birkhoff, Frink1942, Mcshane} (who were all colleagues) introduced the concept of o-convergence. Since then, there has been an abundance of scientific publications in this field.  We now know that a poset may be endowed with many possible topologies (the order topology, the lower topology, the upper topology, Frink's interval topology \cite{Frink1942}, the ideal topology \cite{Frink1954}, the Lawson topology \cite{Gierz-Hofmann-Keimel-Lawson-Mislove-Scott}, the Scott topology \cite{Gierz-Hofmann-Keimel-Lawson-Mislove-Scott, Amadio-Curien}, the Alexandroff topology, etc.) or notions of convergence ($o1$-convergence \cite{Birkhoff, Frink1942, Mcshane, Mathews-Anderson}, $o2$-convergence \cite{Rennie,Ward, Mathews-Anderson}, $o3$-convergence \cite{Mathews-Anderson}, $uo$-convergence, $\lim-\inf$ convergence, MN-convergence \cite{Sun-Li-Fan}, etc.), each one having its own peculiarities. For example, the Scott topology is more often used when studying DCPO in domain theory \cite{Gierz-Hofmann-Keimel-Lawson-Mislove-Scott, Amadio-Curien}, whereas the $uo$-convergence relation is only applied within lattices.

When rings are used alongside the notions of order and topology, we may expect to arrive at the notion of a ''topological ordered ring'' \cite{Garcia-Pacheco-Moreno-Frias-Murillo-Arcila, Branga-Olaru}.
The authors in \cite{Garcia-Pacheco-Moreno-Frias-Murillo-Arcila} explored this direction and showed, for example, that the order topology need not be abandoned in favor of other topologies as it can endow some rings with a valuable topological ordered ring structure. However, we must emphasize that this article is about partially ordered nonassociative rings \cite{Steinberg} \textit{and} weak-quasi-topological nonassociative rings. More precisely, our article is organized mainly around two theorems. First, we prove that the interval $]0,1]$ in any suitable partially ordered nonassociative ring consists entirely of invertible elements. In the second one, we show that if $f$ is a suitably generalized concept of seminorm from a nonassociative ring to a partially ordered nonassociative ring endowed with Frink's interval topology, then under certain conditions, the subset of elements such that $f(1-a) < 1$ consists entirely of invertible elements. Part of the assumption of the second theorem is that of Hausdorff sequential Cauchy-completeness of the first ring under the topology induced by the seminorm $f$ (which takes values in an ordered nonassociative ring endowed with Frink's interval topology). We also dedicate the second section ''Preliminaries'' to show some additional original facts, such as: a nonassociative unitary ring endowed with the topology induced by a norm into a totally ordered associative division ring endowed with Frink's interval topology is a Hausdorff locally convex quasi-topological group with an additional separate continuity property of the product. In this work, such a ring which is quasi-topological as a group and has the additional separate continuity property of the product is called a weak-quasi-topological ring. Finally, we feel obligated to cite the interesting work \cite{Kopperman-Pajoohesh} in which the authors have introduced, in section 5, a very similar topology with a similar degree of generality. 

\section{Preliminaries}

\subsection{Notation}

We set up some notation. \\
$\mathbb{N}$ is the set of natural numbers including 0. \\
For each $n \in \mathbb{N}$, $[\![1,n]\!]$ stands for the set $\{1,2,\cdots,n\}$ of natural numbers. If $n=0$, then this set is just the empty set. \\
If $(P,\leq)$ is a partially ordered set, then $<$ stands for the associated strict order: $a<b$ iff $a \leq b$ and $a \neq b$. \\
Lastly, the notations $[a,b]$, $[a,b[$, $]a,b]$, and $]a,b[$ in a poset $(P,\leq)$ stand for the closed interval of elements $x$ such that $a \leq x \leq b$, the semi-open interval of elements $x$ such that $a \leq x < b$, the semi-open interval of elements $x$ such that $a < x \leq b$, and the open interval of elements $x$ such that $a < x < b$, respectively. 

\subsection{Basic concepts}

\begin{definition}
Let $(P, \leq)$ be a partially ordered set. A subset $S \subseteq P$ is \textbf{convex} if 
\[ \forall x,y,z \in P : (x \in S) \wedge (z \in S) \wedge (x \leq y \leq z) \Rightarrow y \in S. \]
\end{definition}

\begin{definition}
A \textbf{nonassociative and noncommutative ring} $(R,+,0,\times)$ is a set $R$ endowed with an element $0 \in R$ and two binary operations $+ : R \times R \to R$ and $\times : R \times R \to R$ such that $(R,+,0)$ is an Abelian group and $\times$ is left and right distributive with respect to $+$ (i.e., $\times$ is biadditive). \\
For brevity, we call such a structure simply a \textbf{ring}.
\end{definition}

\begin{definition}
\label{definition-ordered-group}
A \textbf{partially ordered group} $(G,\times,1,\leq)$ is a group with a \textit{compatible} partial order. Here, the compatibility means:
\[ \forall x \in G : \forall y \geq z : [(xy \geq xz) \text{ and } (yx \geq zx)]. \]
\end{definition}

\begin{definition}
A \textbf{partially ordered ring} $(R,+,0,\times,\leq)$ is a ring endowed with a \textit{compatible} partial order. Here, the compatibility means that $(R,+,0,\leq)$ is an ordered Abelian group, and that if $R^+ := \{x \in R : x \geq 0\}$, then $(R^+)^2 := (R^+)(R^+) \subseteq R^+$. \\
If $R$ has a multiplicative unit $1$, we may also require that $1 \geq 0$.
\end{definition}

\begin{definition}
Let $(G_1,\times,1)$ be a group and $(G_2,\times,1,\leq)$ be a partially ordered group. \\ 
An \textbf{even submultiplicative function} from $G_1$ to $G_2$ is any map $f$ from $G_1$ to $G_2$ satisfying:
\begin{itemize}
\item $\forall x,y \in R_1 : f(xy) \leq f(x)f(y)$ (submultiplicativity/triangle inequality),
\item $\forall x \in G_1 : f(x^{-1}) = f(x)$ (even function).
\end{itemize}
If $f$ also satisfies:
\begin{itemize}
\item $\forall x \in G_1 : f(x) \geq 0$ (nonnegativity),
\end{itemize}
then $f$ is called a \textbf{seminorm}, and if $f$ satisfies furthermore
\begin{itemize}
\item $\forall x \in G_1 : f(x) = 0 \Rightarrow x=0$ (definiteness),
\end{itemize}
then $f$ is called a \textbf{norm}.
\end{definition}

\begin{definition}
Let $(R_1,+,0,\times)$ be a ring and $(R_2,+,0,\times,\leq)$ be a partially ordered ring. \\ 
An \textbf{even subadditive and submultiplicative function} from $R_1$ to $R_2$ is any map $f$ from $R_1$ to $R_2$ satisfying:
\begin{itemize}
\item $\forall x,y \in R_1 : f(x+y) \leq f(x)+f(y)$ (subadditivity/triangle inequality),
\item $\forall x \in R_1 : f(-x) = f(x)$ (even function),
\item $\forall x,y \in R_1 : f(xy) \leq f(x)f(y)$ (submultiplicativity),
\item if $R_1$ and $R_2$ are unitary, $f(1) \leq 1$.
\end{itemize}
If $f$ also satisfies:
\begin{itemize}
\item $\forall x \in R_1 : f(x) \geq 0$ (nonnegativity),
\end{itemize}
then $f$ is called a \textbf{seminorm}, and if $f$ satisfies furthermore
\begin{itemize}
\item $\forall x \in R_1 : f(x) = 0 \Rightarrow x=0$ (definiteness),
\end{itemize}
then $f$ is called a \textbf{norm}.
\end{definition}

\begin{definition}
Let $(G,\times,1)$ be a group endowed with a nonnecessarily compatible topology $\tau$. \\
We say that $(u_n)_{n \in \mathbb{N}}$ $(u_n \in G \text{ for all } n \in \mathbb{N})$ is a \textbf{Cauchy sequence} if for any any neighborhood $V \subseteq G$ of $1 \in G$, there exists $N \in \mathbb{N}$ such that for all $n,m \geq N$ : $u_n u_m^{-1} \in V$ and $u_n^{-1} u_m \in V$.
\end{definition}

A poset can be endowed with many possible topologies or notions of convergence. For the purpose of this article, we focus on Frink's interval topology. A poset $(P,\leq)$ endowed with the interval topology forms a $T_1$ space.

\begin{definition}
Let $(P,\leq)$ be a poset. Then $P$ can be endowed with a topology, called \textbf{Frink's interval topology}, whose subbase of closed sets is formed by the union of the sets of the type $\{x \in P : x \geq a \}$ and $\{x \in P : x \leq b\}$ where $a$ and $b$ vary in $P$.
\end{definition}

When the order is total, the interval topology is the same as the following topology:

\begin{definition}
Let $(P,\leq)$ be a poset. Then $P$ can be endowed with a topology, called the \textbf{order topology} (not to be confused with the topology associated with order convergence), whose subbase of open sets is formed by the union of the sets of the type $\{x \in P : x > a \}$ and $\{x \in P : x < b\}$ where $a$ and $b$ vary in $P$.
\end{definition}

Some authors have shown that the order topology is still valuable in posets \cite{Garcia-Pacheco-Moreno-Frias-Murillo-Arcila} even though closed intervals are not generally closed for this topology.

The following properties are three examples of ''completeness'' properties. The Archimedean property is considered a completeness property since in a totally ordered field, it is equivalent to the existence of $\inf\limits_{n \in \mathbb{N}} \frac{1}{n+1}$ and the equality $\inf\limits_{n \in \mathbb{N}} \frac{1}{n+1} = 0$ (\cite{Teismann}, Proposition 4 p.105).

\begin{definition}
Let $(G,\times,1,\leq)$ be a partially ordered group. \\
We say that $G$ is \textbf{Archimedean} if
\[ \forall a,b \in G : \left( \forall n \in \mathbb{Z} : a^n \leq b \right) \Rightarrow a \leq 1. \]
A patially ordered ring $(R,\leq)$ is said to be \textbf{Archimedean} if $(R,+,0,\leq)$ is Archimedean as a partially ordered group.
\end{definition}

\begin{example}
We have:
\begin{itemize}
\item The rings $\mathbb{Z}, \mathbb{Q}, \mathbb{R},$ and $\mathbb{C}$ are Archimedean.
\item The rings $\mathbb{R}(x)$ and $\mathbb{R}((x))$ with the partial ordering $F = \sum\limits_{k=v}^d c_kX^k \geq 0$ iff $F=0$ or $c_v > 0$ are not Archimedean. Here, we have $d \in \mathbb{Z}$ in the $\mathbb{R}(x)$ case and $d \in \mathbb{Z} \cup \{+\infty\}$ in the $\mathbb{R}((x))$ case.
\end{itemize}
\end{example}

\begin{definition}
Let $(P,\leq)$ be a poset. We say that $(P,\leq)$ is \textbf{monotone $\sigma$-complete} if any increasing bounded above sequence in $P$ has a supremum.
\end{definition}

\begin{definition}
Let $(G,\times,1)$ be a group endowed with a nonnecessarily compatible topology $\tau$. We say that $G$ is \textbf{sequentially Cauchy complete} if any Cauchy sequence in $G$ converges to at least one limit.
\end{definition}

\subsection{Topology induced by a seminorm and applications}
\label{subsection-properties-topology}

\begin{definition}
\label{definition-quasi-topological-group}
A \textbf{quasi-topological group} is a group $(G,\times,1)$ endowed with a topology $\tau$ satisfying
\[
\begin{aligned}
&\bullet \forall V \in \tau : \forall a \in G : aV \in \tau \text{ and } Va \in \tau, \\
&\bullet \forall V \in \tau : V^{-1} \in \tau,
\end{aligned}
\]
\end{definition}

Equivalently, a quasi-topological group must satisfy

\[
\label{Eq3}
\begin{aligned}
&\bullet \cdot \times \cdot : \begin{cases} G \times G &\to G \\ (x,y) &\mapsto xy \end{cases} \text{ is separately continuous}, \\
&\bullet (\cdot)^{-1} : \begin{cases} G &\to G \\ x &\mapsto x^{-1} \end{cases} \text{ is continuous}.
\end{aligned}
\]

and equivalently, that for any net $(u_\alpha)_{\alpha \in (A,\leq)}$ in $G$ converging to $u \in G$,
\[
\label{Eq}
\begin{aligned}
&(u_\alpha u^{-1})_{\alpha \in (A,\leq)} \text{ converges to 1 and } (u^{-1} u_\alpha)_{\alpha \in (A,\leq)} \text{ converges to 1}, \\
&(u_\alpha^{-1})_{\alpha \in (A,\leq)} \text{ converges to } u^{-1}.
\end{aligned}
\]

\begin{definition}
A \textbf{locally convex quasi-topological partially ordered group} $(G,\times,1,\tau, \leq)$ is a quasi-topological group endowed with a compatible partial order (in the sense of definition \ref{definition-ordered-group}) and admitting a base of convex open sets. 
\end{definition}

\begin{definition}
Let $(G_1,\times,1)$ be a group and $(G_2,\times,1,\tau,\leq)$ be a locally convex quasi-topological partially ordered group. \\
Let $f : (G_1,\times,1) \to (G_2,\times,1,\leq)$ be an even submultiplicative function. \\
In this case, $G_1$ can be endowed with the \textbf{topology induced by $f$}, a topology whose base (see proposition \ref{proposition-base-topology-induced-by-f}) of open sets is $B_V(g) := \{ x \in G_1 : f(xg^{-1}) \in V\}$ where $g$ runs through $G_1$ and where $V$ runs through the neighborhoods of $1$ in $G_2$. This topology is more precisely \textbf{the right topology induced by $f$}, where ''right'' accounts for the possible noncommutativity of $x$ and $g^{-1}$ in the definition of $B_V(g)$.
\end{definition}

\begin{proposition}
\label{proposition-base-topology-induced-by-f}
Let $(G_1,\times,1)$ be a group and $(G_2,\times,1,\tau,\leq)$ be a locally convex quasi-topological partially ordered group. \\
Let $f : (G_1,\times,1) \to (G_2,\times,1,\leq)$ be an even submultiplicative function. \\
Then the sets $B_V(g) := \{ x \in G_1 : f(xg^{-1}) \in V\}$ where $g$ runs through $G_1$ and $V$ runs through the neighborhoods of $1$ in $G_2$, form a base of a topology on $G_1$, called the \textbf{right topology induced by $f$}.
\end{proposition}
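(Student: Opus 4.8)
The plan is to verify the two standard conditions under which a family $\mathcal{B}$ of subsets is a base for a topology on a set: that $\mathcal{B}$ covers $G_1$, and that whenever $x$ lies in the intersection of two members of $\mathcal{B}$, some member of $\mathcal{B}$ contains $x$ and is contained in that intersection. Before doing so I would record the elementary observation that $f(1) \geq 1$: for any $x \in G_1$ submultiplicativity gives $f(x) = f(x \cdot 1) \leq f(x) f(1)$, and left-multiplying by $f(x)^{-1}$ (legitimate by the order-compatibility of Definition \ref{definition-ordered-group}) yields $1 \leq f(1)$. Taking $f(1) = 1$ (the natural normalization, and the one forced in the unital cases of interest) then makes $x \in B_V(x)$ for every neighborhood $V$ of $1$, since $f(x x^{-1}) = f(1) = 1 \in V$; this already settles the covering condition.

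For the refinement condition, suppose $x \in B_{V_1}(g_1) \cap B_{V_2}(g_2)$ and write $v_i := f(x g_i^{-1}) \in V_i$. I would take $g_3 = x$ and look for a neighborhood $V_3$ of $1$ with $B_{V_3}(x) \subseteq B_{V_1}(g_1) \cap B_{V_2}(g_2)$. The heart of the matter is a two-sided estimate: for $y \in G_1$, writing $t := f(y x^{-1})$ and using $y g_i^{-1} = (y x^{-1})(x g_i^{-1})$ together with submultiplicativity gives the upper bound $f(y g_i^{-1}) \leq t\, v_i$, while writing $x g_i^{-1} = (x y^{-1})(y g_i^{-1})$, submultiplicativity, evenness ($f(x y^{-1}) = f(y x^{-1}) = t$), and left multiplication by $t^{-1}$ give the lower bound $t^{-1} v_i \leq f(y g_i^{-1})$. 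Thus
\[ t^{-1} v_i \;\leq\; f(y g_i^{-1}) \;\leq\; t\, v_i . \]

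It remains to choose $V_3$ so that the outer terms trap $f(y g_i^{-1})$ inside $V_i$. Here I would invoke local convexity to pick a convex open $W_i$ with $v_i \in W_i \subseteq V_i$, and then the separate continuity of multiplication and the continuity of inversion in the quasi-topological group $G_2$ (Definition \ref{definition-quasi-topological-group}) to note that both maps $t \mapsto t\, v_i$ and $t \mapsto t^{-1} v_i$ are continuous and send $1$ to $v_i \in W_i$; hence there is a neighborhood $N_i$ of $1$ with $t v_i \in W_i$ and $t^{-1} v_i \in W_i$ whenever $t \in N_i$. Setting $V_3 = N_1 \cap N_2$, for any $y \in B_{V_3}(x)$ we have $t \in V_3$, so the two outer terms of the displayed estimate lie in $W_i$, and convexity of $W_i$ forces $f(y g_i^{-1}) \in W_i \subseteq V_i$; that is, $y \in B_{V_i}(g_i)$ for $i = 1,2$. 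Combined with $x \in B_{V_3}(x)$ from the normalization, this gives the desired refinement.

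The step I expect to be the main obstacle is precisely this last trapping argument: submultiplicativity alone only yields the one-sided bound, so the proof genuinely needs the reverse estimate extracted from evenness, and it needs order-convexity to convert ``between two points of $V_i$'' into ``in $V_i$.'' The role of local convexity and of the separate continuity of the operations of $G_2$ is to guarantee that \emph{both} endpoints can be driven simultaneously into a single convex neighborhood of $v_i$ by shrinking the radius $t$; no joint continuity of multiplication is required, which is what makes the weaker quasi-topological hypothesis sufficient. I would also flag the normalization $f(1) = 1$ as the one place where care is needed, since for a general even submultiplicative $f$ one only gets $f(1) \geq 1$, and the membership $x \in B_{V}(x)$ (hence membership of $x$ in the refining set) relies on $f(1)$ lying in every neighborhood of $1$.
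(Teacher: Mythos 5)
Your proof is correct and follows essentially the same route as the paper's: the same two-sided estimate $f(yx^{-1})^{-1}f(xg_i^{-1}) \leq f(yg_i^{-1}) \leq f(yx^{-1})f(xg_i^{-1})$ obtained from submultiplicativity plus evenness, the same order-convexity trapping, and the same use of the quasi-topological structure of $G_2$ to build the refining neighborhood of $1$ (the paper writes it explicitly as $(Vf(g''g^{-1})^{-1}) \cap (Vf(g''g^{-1})^{-1})^{-1} \cap \cdots$, which is exactly your $N_1 \cap N_2$). The only substantive difference is that you explicitly verify the point-membership $x \in B_{V_3}(x)$ and the covering condition, which forces the normalization $f(1)=1$; the paper works with the weaker criterion ``$B_{V''}(g'') \subseteq B_V(g) \cap B_{V'}(g')$'' and never checks $g'' \in B_{V''}(g'')$, so your flagging of this point is a fair catch rather than a defect of your argument.
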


\begin{proof}
We need to show that for any $B_V(g)$ and $B_{V'}(g')$ such that $B_V(g) \cap B_{V'}(g') \neq \emptyset$, there exist $V''$ and $g''$ such that $B_{V''}(g'') \subseteq B_V(g) \cap B_{V'}(g')$. \\
Let $g'' \in B_V(g) \cap B_{V'}(g')$.  First, note that we can assume without loss of generality that $V$ and $V'$ are convex, since $G_2$ is a locally convex quasi-topological partially ordered group.  Next, observe that since $G_2$ is a quasi-topological group,
\[ V'' := (V f(g''g^{-1})^{-1}) \cap (V f(g''g^{-1})^{-1})^{-1} \cap (V' f(g''g'^{-1})^{-1}) \cap (V' f(g''g'^{-1})^{-1})^{-1} \]
is a neighborhood of $1$ in $G_2$. \\
Let $x \in B_{V''}(g'')$. We have 
\[ f(xg''^{-1})^{-1} f(g''g^{-1}) \leq f(xg^{-1}) \leq  f(xg''^{-1}) f(g''g^{-1}). \]
Therefore, there exist $v_1$ and $v_2$ in $V$ such that
\[ v_1 \leq f(xg^{-1}) \leq v_2. \]
By convexity of $V$, we obtain $f(xg^{-1}) \in V$, so $B_{V''}(g'') \subseteq B_V(g)$. \\
Similarly, $B_{V''}(g'') \subseteq B_{V'}(g')$, and the proof is finished.
\end{proof}

\begin{corollary}
\label{corollary-open-topology-associated-to-f}
Let $(G_1,\times,1)$ be a group and $(G_2,\times,1,\leq)$ be a locally convex quasi-topological partially ordered group. \\
Let $f : (G_1,\times,1) \to (G_2,\times,1,\leq)$ be an even submultiplicative function. \\
Then a set $E \subseteq G_1$ is open $\Leftrightarrow \forall e \in E : \exists V_e \text{ neighborhood of } 1 : B_{V_e}(e) \subseteq E.$
\end{corollary}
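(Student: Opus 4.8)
The plan is to recognize the statement as the familiar criterion that a set is open in the topology generated by a base exactly when each of its points carries a basic neighborhood contained in the set, and then to upgrade this to the sharper form in which the basic neighborhood may be \emph{centered at the point itself}. Throughout I would rely on Proposition \ref{proposition-base-topology-induced-by-f}, which guarantees that the family $\mathcal{B} = \{B_V(g)\}$ is a base for the right topology induced by $f$, together with the elementary observation that $e \in B_V(e)$ for every neighborhood $V$ of $1$, since $f(e e^{-1}) = f(1)$ lies in $V$.

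For the implication ($\Leftarrow$), I would assume that for each $e \in E$ there is a neighborhood $V_e$ of $1$ with $B_{V_e}(e) \subseteq E$. Since $e \in B_{V_e}(e)$, this yields $E = \bigcup_{e \in E} B_{V_e}(e)$; being a union of base elements, $E$ is open. This direction is routine once the membership $e \in B_{V_e}(e)$ is in hand.

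The substance lies in ($\Rightarrow$), the re-centering step. I would assume $E$ open, fix $e \in E$, and use the definition of the topology generated by $\mathcal{B}$ to obtain $V$ and $g$ with $e \in B_V(g) \subseteq E$. I would then invoke Proposition \ref{proposition-base-topology-induced-by-f} with the two base elements both taken to be $B_V(g)$ and the distinguished point taken to be $e$ itself: it furnishes a neighborhood $V_e$ of $1$ with $B_{V_e}(e) \subseteq B_V(g) \subseteq E$, which is exactly what is required. Unwinding that proposition, the engine is the two-sided estimate
\[ f(x e^{-1})^{-1} f(e g^{-1}) \le f(x g^{-1}) \le f(x e^{-1}) f(e g^{-1}), \]
coming from submultiplicativity and evenness of $f$; choosing $V_e$ inside $(V f(eg^{-1})^{-1}) \cap (V f(eg^{-1})^{-1})^{-1}$—a neighborhood of $1$ because right translation and inversion are homeomorphisms of the quasi-topological group $G_2$—forces both bounding terms into $V$, whence $f(xg^{-1}) \in V$ by convexity of $V$ (available since $G_2$ is locally convex).

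The main obstacle is precisely this re-centering: passing from an arbitrary basic neighborhood $B_V(g)$ of $e$ to one of the special form $B_{V_e}(e)$. Since this is genuinely the content of Proposition \ref{proposition-base-topology-induced-by-f}, the cleanest route is to cite that result rather than repeat the estimate. The only delicate point deserving explicit care is that $e$ actually lies in $B_{V_e}(e)$, i.e.\ that $f(1)$ belongs to the chosen $V_e$; this membership is what lets the union in the ($\Leftarrow$) direction recover all of $E$, and it is the place where the normalization $f(1) = 1$—equivalently, $f(1)$ lying in every neighborhood of $1$—enters as the natural assumption, the inequality $1 \le f(1)$ following already from $f(g) \le f(g) f(1)$.
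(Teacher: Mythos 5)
Your proof is correct and follows essentially the same route as the paper's: the forward direction is handled by re-centering, i.e.\ applying Proposition \ref{proposition-base-topology-induced-by-f} with $B_V(g)=B_{V'}(g')$ and the distinguished point $g''$ taken to be $e$ itself, and the reverse direction by writing $E$ as a union of base elements. Your additional remark that the reverse direction silently needs $e\in B_{V_e}(e)$, i.e.\ $f(1)\in V_e$, is a legitimate point of care that the paper's proof passes over with ``clear.''
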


\begin{proof}
The proof of the corollary in the direction ($\Leftarrow$) is clear. In the other direction, the idea is to note that for any $B_V(g)$ and $B_{V'}(g')$ such that $B_V(g) \cap B_{V'}(g') \neq \emptyset$, $g''$ was chosen arbitrarily from $B_V(g) \cap B_{V'}(g')$ in the proof of proposition \ref{proposition-base-topology-induced-by-f}. Therefore, let $E \subseteq G_1$ be an open susbset. For any $e \in E$, since $E$ is an open subset of $G_1$ containing $e$, we can choose $g \in G_1$ and an open neighborhood $V$ of $1$ in $G_2$ such that $B_V(g)$ contains $e$ and is included in $E$. By choosing $V'=V$ and $g'=g$, we can deduce by proposition \ref{proposition-base-topology-induced-by-f} the existence of an open neighborhood $V_e$ of $1$ such that $B_{V_e}(e) \subseteq B_V(g) \cap B_V(g) = B_V(g) \subseteq E$.
\end{proof}

\begin{proposition}
Let $(G_2,\times,1,\leq)$ be a locally convex quasi-topological partially ordered group. \\
Moreover, let $(G_1,\times,1)$ be an abelian group and $f : (G_1,\times,1) \to (G_2,\times,1,\leq)$ be an even submultiplicative function. \\
We endow $G_1$ with the topology induced by $f$. \\
Then $G_1$ is a quasi-topological group.
\end{proposition}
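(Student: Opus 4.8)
The plan is to verify the two translation/inversion conditions of Definition \ref{definition-quasi-topological-group} directly, working with the base $B_V(g) = \{x \in G_1 : f(xg^{-1}) \in V\}$ together with the characterization of openness from Corollary \ref{corollary-open-topology-associated-to-f}: a set $E$ is open if and only if every $e \in E$ admits a neighborhood $V_e$ of $1$ in $G_2$ with $B_{V_e}(e) \subseteq E$. The key preliminary observation I would record first is that the base sets transform neatly under translation: for $g,h \in G_1$ and any neighborhood $V$ of $1$, one has $B_V(g)\,h = B_V(gh)$, since writing $y = xh$ gives $xg^{-1} = y(gh)^{-1}$, so $f(xg^{-1}) \in V \Leftrightarrow f(y(gh)^{-1}) \in V$. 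Because $G_1$ is abelian, left and right translations coincide, hence $a\,B_V(g) = B_V(g)\,a = B_V(ga)$ as well. This identity is purely formal in $G_1$ and uses neither the order nor the topology of $G_2$.

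For the translation condition, let $E$ be open and fix $a \in G_1$; I would show $aE$ is open via Corollary \ref{corollary-open-topology-associated-to-f}. Given $e' \in aE$, write $e' = ae$ with $e \in E$ and pick $V_e$ with $B_{V_e}(e) \subseteq E$. Translating by $a$ and applying the identity above yields $B_{V_e}(e') = B_{V_e}(ae) = a\,B_{V_e}(e) \subseteq aE$, so $aE$ is open. This step is essentially bookkeeping once the translation identity is in hand.

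The substantive step is continuity of inversion, namely showing $E^{-1}$ is open whenever $E$ is, and this is where I would exploit the evenness of $f$. Fix $e' \in E^{-1}$, so $e' = e^{-1}$ with $e \in E$, and take $V_e$ with $B_{V_e}(e) \subseteq E$. I claim $B_{V_e}(e^{-1}) \subseteq E^{-1}$. Indeed, let $x \in B_{V_e}(e^{-1})$, i.e. $f(xe) \in V_e$ (using $(e^{-1})^{-1} = e$). Then, combining the even property with commutativity,
\[ f(x^{-1}e^{-1}) = f\big((ex)^{-1}\big) = f(ex) = f(xe) \in V_e, \]
so $x^{-1} \in B_{V_e}(e) \subseteq E$, whence $x \in E^{-1}$. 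Thus $B_{V_e}(e^{-1}) \subseteq E^{-1}$ and $E^{-1}$ is open.

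The main obstacle I anticipate is precisely this inversion step: one must chain the even identity $f(z^{-1}) = f(z)$ together with the commutativity of $G_1$ in exactly the right order so that the membership condition defining $B_{V_e}(e^{-1})$ transports, after inverting, to the condition defining $B_{V_e}(e)$. By contrast, the translation condition reduces to the formal base-set identity above. Having verified both bullets of Definition \ref{definition-quasi-topological-group}, I would conclude that $G_1$, endowed with the topology induced by $f$, is a quasi-topological group.
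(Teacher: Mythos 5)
Your proof is correct and follows essentially the same route as the paper's: both rest on the translation identity $aB_V(g)=B_V(g)a=B_V(ga)$ and on the inversion identity $f(x^{-1}g^{-1})=f((gx)^{-1})=f(xg)$ obtained by chaining evenness with commutativity of $G_1$. The only cosmetic difference is that you package the verification through the openness criterion of Corollary \ref{corollary-open-topology-associated-to-f}, whereas the paper writes an arbitrary open set as a union of base sets and distributes translation and inversion over the union.
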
 

\begin{proof}
An open subset $W$ of $G_1$ has the form 
\[ \bigcup\limits_{e \in E} B_{V_e}(g_e), \]
where $V_e$ are open sets in $G_2$ containing $1$ and $g_e \in G_1$. \\
To prove that the assertions of definition \ref{definition-quasi-topological-group} hold for $W$, note that it is sufficient to prove that they hold for $B_{V_e}(g_e)$ for any choice of $e \in E$, because 
\begin{align*}
& a\left( \bigcup\limits_{e \in E} B_{V_e}(g_e) \right) = \bigcup\limits_{e \in E} (aB_{V_e}(g_e))  \\
& \left( \bigcup\limits_{e \in E} B_{V_e}(g_e) \right)a = \bigcup\limits_{e \in E} (B_{V_e}(g_e)a) \\
& \left( \bigcup\limits_{e \in E} B_{V_e}(g_e) \right)^{-1} = \bigcup\limits_{e \in E} (B_{V_e}(g_e))^{-1} 
\end{align*}
and an arbitrary union of open subsets is an open subset. \\
Since $G_1$ is abelian, we have for any $e \in E$,
\[ aB_{V_e}(g_e) = B_{V_e}(g_e)a, \]
and
\begin{align*}
y \in aB_{V_e}(g_e) &\Leftrightarrow ya^{-1} \in B_{V_e}(g_e)) \\
&\Leftrightarrow f(ya^{-1}g_e^{-1}) \in V_e \\
&\Leftrightarrow f(y(g_ea)^{-1}) \in V_e \\
&\Leftrightarrow y \in B_{V_e}(g_ea)
\end{align*}
So 
\[
aB_{V_e}(g_e) = B_{V_e}(g_e)a = B_{V_e}(g_ea).
\]
is an open subset of $G_1$. \\
Moreover, using again the fact that $G_1$ is abelian, we have
\begin{align*}
y \in B_{V_e}(g_e)^{-1} &\Leftrightarrow y^{-1} \in B_{V_e}(g_e)) \\
&\Leftrightarrow f(y^{-1}g_e^{-1}) \in V_e \\
&\Leftrightarrow f(g_e^{-1}y^{-1}) \in V_e \\
&\Leftrightarrow f((yg_e)^{-1}) \in V_e \\
&\Leftrightarrow f(yg_e) \in V_e \\
&\Leftrightarrow y \in B_{V_e}(g_e^{-1})
\end{align*}
So
\[
B_{V_e}(g_e)^{-1} = B_{V_e}(g_e^{-1}).
\]
is an open subset of $G_1$.
\end{proof}

The following proposition shows that multiplication by $a$ (where $f(a) \leq 1$) is continuous when $R_1$ is endowed with the topology induced by a seminorm into a locally convex quasi-topological partially ordered ring. Rings that are quasi-topological as groups and which furthermore satisfy this property (or its stronger form stated in proposition \ref{proposition-phia-continuous}) are termed weak-quasi-topological rings in this article.

\begin{proposition}
\label{proposition-phia-continuous-f(a)-<=-1}
Let $(R_1,+,0,\times)$ be a ring and $(R_2,+,0,\times,\leq)$ be a locally convex quasi-topological partially ordered ring.
Let $f : (R_1,+,0,\times) \to (R_2,+,0,\times,\leq)$ be a seminorm. \\
We endow $R_1$ with the topology induced by $f$. \\
Then for all $a \in R_1$ such that $f(a) \leq 1$, the map $\phi_a : \begin{cases} R_1 &\to R_1 \\ x &\mapsto ax \end{cases}$ is continuous.
\end{proposition}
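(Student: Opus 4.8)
The plan is to prove continuity by pulling back open sets and invoking the local characterization of openness from Corollary~\ref{corollary-open-topology-associated-to-f}, transcribed to the additive group $(R_1,+,0)$ so that the basic sets read $B_V(g) = \{x \in R_1 : f(x-g) \in V\}$ with $V$ a neighborhood of $0$ in $R_2$. Fix an open set $W \subseteq R_1$; I want $\phi_a^{-1}(W)$ to be open. Let $x_0 \in \phi_a^{-1}(W)$, i.e. $ax_0 \in W$. By the corollary together with the local convexity of $R_2$, I may choose a \emph{convex} neighborhood $V$ of $0$ in $R_2$ with $B_V(ax_0) \subseteq W$. The claim to be proved is then that $B_V(x_0) \subseteq \phi_a^{-1}(W)$; granting this, the corollary delivers openness of $\phi_a^{-1}(W)$ and hence continuity of $\phi_a$.

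The core of the argument is a single estimate. For $x \in R_1$, biadditivity (left distributivity) gives $\phi_a(x) - \phi_a(x_0) = ax - ax_0 = a(x-x_0)$, whence by submultiplicativity $f(ax-ax_0) = f(a(x-x_0)) \le f(a)\,f(x-x_0)$. Now I would use the hypothesis $f(a) \le 1$: since $1 - f(a) \ge 0$ and $f(x-x_0) \ge 0$ by nonnegativity of the seminorm, the ordered-ring axiom $(R_2^+)^2 \subseteq R_2^+$ yields $(1-f(a))\,f(x-x_0) \ge 0$, that is, $f(a)\,f(x-x_0) \le f(x-x_0)$. Combining this with nonnegativity of $f$ produces the key sandwiching estimate
\[ 0 \le f(\phi_a(x) - \phi_a(x_0)) \le f(x-x_0). \]

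It then remains only to convert this estimate into membership. Suppose $x \in B_V(x_0)$, so $f(x-x_0) \in V$; since $V$ is a neighborhood of $0$ we also have $0 \in V$, and the sandwiching estimate places $f(\phi_a(x)-\phi_a(x_0))$ between the two elements $0$ and $f(x-x_0)$ of $V$. Convexity of $V$ then forces $f(\phi_a(x)-\phi_a(x_0)) \in V$, i.e. $\phi_a(x) = ax \in B_V(ax_0) \subseteq W$, so $x \in \phi_a^{-1}(W)$; this is exactly the desired inclusion $B_V(x_0) \subseteq \phi_a^{-1}(W)$. The step I expect to be the only delicate one is the order-theoretic passage from $f(a) \le 1$ to $f(a)\,f(x-x_0) \le f(x-x_0)$: it is precisely here that the compatibility of the order with multiplication (via $(R_2^+)^2 \subseteq R_2^+$) and the nonnegativity of $f$ are needed, and it explains why the hypothesis is phrased as the inequality $f(a) \le 1$. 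The other ingredient one must not forget is local convexity, which is what allows the sandwiching estimate to be upgraded to genuine membership in $V$.
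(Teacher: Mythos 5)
Your proof is correct and follows essentially the same route as the paper's: pick a convex neighborhood $V$ of $0$ with $B_V(ax_0)\subseteq W$, establish the sandwich $0\le f(ax-ax_0)\le f(x-x_0)$, and conclude by convexity of $V$. The only difference is that you spell out the order-theoretic step $f(a)f(x-x_0)\le f(x-x_0)$ explicitly, which the paper leaves implicit.
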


\begin{proof}
Let $r \in R_1$. Let $W$ be a neighborhood of $ar$ in $R_1$. By corollary \ref{corollary-open-topology-associated-to-f} and local convexity of $R_2$, there exists a convex neighborhood $V$ of $0$ in $R_2$ such that 
\[ B_V(ar) \subseteq W. \]
Let $x \in B_{V}(r)$. We have 
\[ 0 \leq f(ax - ar) \leq  f(x-r). \]
Therefore, since $0$ and $f(x-r)$ belong to $V$, it follows by convexity of $V$ that $f(ax-ar) \in V$, so that $\phi_a(x) \in B_{V}(ar) \subseteq W$. Hence, $\phi_a$ is continuous at $r \in R_1$, which holds for any $r \in R_1$.
\end{proof}

When $R_2$ is assumed to be an associative division ring, we have more:

\begin{proposition}
\label{proposition-phia-continuous}
Let $(R_1,+,0,\times)$ be a ring and $(R_2,+,0,\times,\leq)$ be a locally convex quasi-topological partially ordered division ring.
Let $f : (R_1,+,0,\times) \to (R_2,+,0,\times,\leq)$ be a seminorm. \\
We endow $R_1$ with the topology induced by $f$. \\
Then for all $a \in R_1$, the map $\phi_a : \begin{cases} R_1 &\to R_1 \\ x &\mapsto ax \end{cases}$ is continuous.
\end{proposition}

\begin{proof}
Let $r \in R_1$. Let $W$ be a neighborhood of $ar$ in $R_1$. By corollary \ref{corollary-open-topology-associated-to-f} and the local convexity of $R_2$, there exists a convex neighborhood $V$ of $0$ in $R_2$ such that 
\[ B_V(ar) \subseteq W. \]
Because $R_2$ is a division ring, there exists a neighborhood $V'$ of $0$ such that
\[ f(a)V' \subseteq V. \]
Let $x \in B_{V'}(r)$. We have 
\[ 0 \leq f(ax - ar) \leq f(a)f(x-r). \]
Therefore, since $0$ and $f(a)f(x-r)$ belong to $V$, it follows by convexity of $V$ that $f(ax-ar) \in V$, so $\phi_a(x) \in B_{V}(ar) \subseteq W$. Hence, $\phi_a$ is continuous at $r \in R_1$, which holds for any $r \in R_1$.
\end{proof}

\subsection{The interval topology as an example of a locally convex poset topology}

The following proposition shows that all the properties of section \ref{subsection-properties-topology} are true for $(G_2,\times,1)$, a partially ordered group endowed with the interval topology.

\begin{proposition}
Let $(G,\times,1,\leq)$ be a partially ordered group that we endow with the interval topology. Then, $G$ is a locally convex quasi-topological group.
\end{proposition}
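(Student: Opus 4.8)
The plan is to verify the two required features separately---that $G$ is a quasi-topological group and that the interval topology is locally convex---by reducing everything to how translations and inversion act on the defining subbasis of closed sets. Recall that this subbasis consists of the principal up-sets $\{x \in G : x \geq a\}$ and the principal down-sets $\{x \in G : x \leq b\}$. The guiding observation is that compatibility of the order makes each translation an order automorphism of $(G,\leq)$, while inversion is an order \emph{anti}-automorphism; consequently these bijections permute the subbasis (or swap its two halves), and hence are homeomorphisms.

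For the quasi-topological group structure, I would first record that left translation $L_c : x \mapsto cx$ satisfies $cx \geq ca \Leftrightarrow x \geq a$ (the forward direction is compatibility, the reverse follows by translating by $c^{-1}$), so $L_c$ sends $\{x : x \geq a\}$ onto $\{y : y \geq ca\}$ and $\{x : x \leq b\}$ onto $\{y : y \leq cb\}$. As $a$ ranges over $G$ so does $ca$, hence $L_c$ maps the closed subbasis bijectively onto itself; since $L_c^{-1} = L_{c^{-1}}$ does too, $L_c$ is a homeomorphism, and the same argument applies to right translations $R_c$. For inversion $\iota : x \mapsto x^{-1}$, the key small lemma is that $y \geq z$ implies $z^{-1} \geq y^{-1}$ (left-multiply $y \geq z$ by $y^{-1}$ to get $1 \geq y^{-1}z$, then right-multiply by $z^{-1}$), i.e. $\iota$ is order-reversing; therefore $\iota$ sends $\{x : x \geq a\}$ onto $\{y : y \leq a^{-1}\}$ and $\{x : x \leq b\}$ onto $\{y : y \geq b^{-1}\}$, swapping the two families of the subbasis and so again yielding a homeomorphism. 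Since translations and inversion are homeomorphisms, the conditions of definition \ref{definition-quasi-topological-group} hold.

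For local convexity, I would show directly that every basic open set is convex. A basic open set is a finite intersection of complements of subbasic closed sets, so it suffices to check that each such complement is convex and that convexity is stable under finite intersection. The complement $\{x : \neg(x \geq a)\}$ is convex: if $x \leq y \leq z$ with $z \not\geq a$, then $y \geq a$ would force $z \geq y \geq a$, a contradiction; dually, $\{x : \neg(x \leq b)\}$ is convex, the contradiction now coming from the lower endpoint. Stability under intersection is immediate from the definition of convexity. Hence the finite intersections of subbasic open sets form a base of convex open sets, and together with the (given) compatibility of the order this shows $G$ is a locally convex quasi-topological group.

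The only genuinely delicate point is the passage from ``permutes the subbasis'' to ``is a homeomorphism'': one must note that a bijection carrying a subbasis of closed sets onto itself automatically carries arbitrary closed sets to closed sets (closed sets being generated from the subbasis by finite unions and arbitrary intersections, both of which a bijection preserves), and then apply this to both the map and its inverse. The order-reversing behaviour of inversion also deserves care, since it is precisely what interchanges the up-sets with the down-sets rather than preserving each family.
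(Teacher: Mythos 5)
Your proposal is correct and follows essentially the same route as the paper: both arguments rest on the observation that compatibility makes translations carry the subbasic sets $\{x \geq a\}$ and $\{x \leq b\}$ to sets of the same form while inversion reverses the order and swaps the two families, and both establish local convexity by checking directly that the basic open sets (complements of finite unions of subbasic closed sets) are convex via the same contradiction with the endpoints. Your packaging of the first half as ``these bijections permute the closed subbasis, hence are homeomorphisms'' is a slightly more abstract phrasing of the paper's explicit computation of $aV$, $Va$ and $V^{-1}$, but the underlying content is identical.
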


\begin{proof}
\begin{itemize}
\item Let $V$ be an open subset of $G$, and let $a \in G$. To prove that the assertions of definition \ref{definition-quasi-topological-group} hold for $V$, note that it is sufficient to prove that they hold when $V$ is an element of the base of open subsets of $G$, because
\begin{align*}
& a\left( \bigcup\limits_{e \in E} V_e \right) = \bigcup\limits_{e \in E} (aV_e)  \\
& \left( \bigcup\limits_{e \in E} V_e \right)a = \bigcup\limits_{e \in E} (V_ea) \\
& \left( \bigcup\limits_{e \in E} V_e \right)^{-1} = \bigcup\limits_{e \in E} (V_e)^{-1} 
\end{align*}
and an arbitrary union of open subsets is an open subset. \\
Therefore, let $V$ be an element of the base of open subsets of $G$. We can write
\[
V = G \setminus \left[ \left( \bigcup\limits_{i \in [\![1,n_1]\!]} ]-\infty,b_i] \right) \cup \left( \bigcup\limits_{j \in [\![1,n_2]\!]} [a_j,+\infty[ \right) \right].
\]
$x \in a V$ is equivalent to $a^{-1}x \in V$, which is equivalent to
\[
\begin{aligned}
&\forall i \in [\![1,n_1]\!] : \neg(a^{-1}x \leq b_i), \\
&\forall j \in [\![1,n_2]\!] : \neg(a_j \leq a^{-1}x)
\end{aligned}
\]
and is also equivalent to
\[
\begin{aligned}
&\forall i \in [\![1,n_1]\!] : \neg(x \leq ab_i), \\
&\forall j \in [\![1,n_2]\!] : \neg(aa_j \leq x)
\end{aligned}
\]
Therefore
\[
\begin{aligned}
aV =  G \setminus \left[ \left( \bigcup\limits_{i \in [\![1,n_1]\!]} ]-\infty,ab_i] \right) \cup \left( \bigcup\limits_{j \in [\![1,n_2]\!]} [aa_j,+\infty[ \right) \right]
\end{aligned}
\]
is an open subset of $G$.
Similarly
\[
\begin{aligned}
Va =  G \setminus \left[ \left( \bigcup\limits_{i \in [\![1,n_1]\!]} ]-\infty,b_ia] \right) \cup \left( \bigcup\limits_{j \in [\![1,n_2]\!]} [a_ja,+\infty[ \right) \right]
\end{aligned}
\]
is an open subset of $G$.
Moreover, $x \in V^{-1}$ is equivalent to $x^{-1} \in V$, which is equivalent to
\[
\begin{aligned}
&\forall i \in [\![1,n_1]\!] : \neg(x^{-1} \leq b_i), \\
&\forall j \in [\![1,n_2]\!] : \neg(a_j \leq x^{-1})
\end{aligned}
\]
and is also equivalent to
\[
\begin{aligned}
&\forall i \in [\![1,n_1]\!] : \neg(b_i^{-1} \leq x), \\
&\forall j \in [\![1,n_2]\!] : \neg(x \leq a_j^{-1})
\end{aligned}
\]
Therefore
\[
\begin{aligned}
V^{-1} =  G \setminus \left[ \left( \bigcup\limits_{j \in [\![1,n_2]\!]} ]-\infty,a_j^{-1}] \right) \cup \left( \bigcup\limits_{i \in [\![1,n_1]\!]} [b_i^{-1},+\infty[ \right) \right].
\end{aligned}
\]
is an open subset of $G$.
\item Let us prove that $G$ is locally convex, i.e., that $G$ admits a base of convex open subsets. Let us use the defining base of open subsets of $G$. An open subset of this base is written as follows:
\[
V = G \setminus \left[ \left( \bigcup\limits_{i \in [\![1,n_1]\!]} ]-\infty,b_i] \right) \cup \left( \bigcup\limits_{j \in [\![1,n_2]\!]} [a_j,+\infty[ \right) \right].
\]
Let $v_1$ and $v_2$ be two elements of $V$, and $x \in G$ is such that 
\[ v_1 \leq x \leq v_2. \]
For the sake of contradiction, suppose that $x \notin V$. Then, we have either
\begin{itemize}
\item $\exists i \in [\![1,n_1]\!] : x \leq b_i$, which leads to the contradiction $v_1 \leq b_i$.
\item $\exists j \in [\![1,n_2]\!] : a_j \leq x$, which leads to the contradiction $a_j \leq v_2$.
\end{itemize}
Therefore, $x \in V$ and $V$ is convex; therefore $G$ is locally convex.
\end{itemize}
\end{proof}

\subsection{Existence and uniqueness of limits in Frink's interval topology}

The following lemma is also needed in the remainder of the article.

\begin{lemma}
\label{lemma-increasing-sequence-converges-to-sup-interval-topology}
Let $(P,\leq)$ be a poset and $(u_n)_{n \in \mathbb{N}}$ be an increasing sequence of elements of $P$ admitting a supremum $u := \sup\limits_{n \in \mathbb{N}} \{u_n\}$. \\
Then $u$ is the unique limit of $(u_n)_{n \in \mathbb{N}}$ in Frink's interval topology.
\end{lemma}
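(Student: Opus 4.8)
The plan is to work directly from the definition of Frink's interval topology, whose closed subbase consists of the rays $\{x : x \geq a\}$ and $\{x : x \leq b\}$, so that a subbasic open set has one of the two forms $\{x : x \not\geq a\}$ or $\{x : x \not\leq b\}$, and a basic open neighborhood of a point is a finite intersection of such sets. I would establish the two assertions --- that $u$ is a limit, and that it is the only one --- separately.

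For convergence $u_n \to u$, I would first reduce to subbasic neighborhoods: since a basic open neighborhood of $u$ is a finite intersection of subbasic open sets each containing $u$, and since \emph{eventual membership} is stable under finite intersection (take the maximum of the finitely many thresholds), it suffices to show that $(u_n)$ is eventually in every subbasic open set containing $u$. There are two cases. If $u \in \{x : x \not\geq a\}$, I claim every term already lies in this set: from $u_n \leq u$ and the hypothesis $a \not\leq u$, the relation $a \leq u_n$ would force $a \leq u$, a contradiction, so $u_n \not\geq a$ for all $n$. If $u \in \{x : x \not\leq b\}$, I would argue by contradiction that only finitely many terms can satisfy $u_n \leq b$: were there infinitely many, monotonicity would give $u_n \leq b$ for all $n$, making $b$ an upper bound and hence $u = \sup_n u_n \leq b$, contradicting $u \not\leq b$; thus the sequence is eventually in $\{x : x \not\leq b\}$. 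This settles that $u$ is a limit.

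For uniqueness, suppose $(u_n)$ also converges to some $v \in P$; I would pin $v$ down by sandwiching it between $u$ and itself via two closed rays. First, fixing $n_0$, the tail $\{u_n : n \geq n_0\}$ lies in the closed set $\{x : x \geq u_{n_0}\}$ by monotonicity; if $v$ failed to lie there, $v$ would belong to the open complement, which would then be a neighborhood of $v$ that the sequence is not eventually in, contradicting $u_n \to v$. Hence $v \geq u_{n_0}$ for every $n_0$, so $v$ is an upper bound of $\{u_n\}$ and therefore $u = \sup_n u_n \leq v$. Symmetrically, the entire sequence lies in the closed set $\{x : x \leq u\}$ because $u$ is an upper bound, so the same neighborhood argument forces $v \leq u$. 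Antisymmetry then yields $v = u$.

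The step I expect to be the main obstacle is uniqueness, precisely because Frink's interval topology is in general only $T_1$ and need not be Hausdorff, so limits of sequences are not automatically unique and one cannot appeal to a separation axiom. The resolution is to exploit the order-theoretic nature of the subbasic closed sets: monotonicity places the appropriate tails inside the rays $\{x : x \geq u_{n_0}\}$ and the whole sequence inside $\{x : x \leq u\}$, while the convergence of $(u_n)$ to $v$ forbids $v$ from escaping any closed set that eventually contains the sequence; combining these two constraints squeezes $v$ to equal $\sup_n u_n = u$.
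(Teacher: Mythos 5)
Your proof is correct and follows essentially the same route as the paper's: both halves rest on the same analysis of the subbasic closed rays, with the lower rays $\{x : x \leq b\}$ handled via the supremum property for convergence, and the rays $\{x : x \geq u_{n_0}\}$ and $\{x : x \leq u\}$ squeezing any limit to equal $u$ for uniqueness. The only differences are organizational --- you reduce to subbasic neighborhoods and take a maximum of thresholds where the paper works with basic open sets via a pigeonhole-plus-convexity argument, and your uniqueness is a direct sandwich rather than the paper's two-case contradiction --- but the underlying ideas coincide.
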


\begin{proof}
\begin{itemize}
\item Let us show that $u$ is a limit of $(u_n)_{n \in \mathbb{N}}$. It suffices to show that for any open subset of the form
\[ V = P \setminus \left[ \left( \bigcup\limits_{i \in [\![1,n_1]\!]} ]-\infty,b_i] \right) \cup \left( \bigcup\limits_{j \in [\![1,n_2]\!]} [a_j,+\infty[ \right) \right] \]
containing $u$, there exists $N \in \mathbb{N}$ such that for all $n \geq N$, $u_n \in V$.
We have $u \in V$, so:
\[
\begin{aligned}
&\forall i \in [\![1,n_1]\!] : \neg(u \leq b_i), \\
&\forall j \in [\![1,n_2]\!] : \neg(a_j \leq u)
\end{aligned}
\]
Let $n \in \mathbb{N}$. Suppose by contradiction that $u_n \notin V$. Then we have either
\begin{itemize}
\item $\exists j \in [\![1,n_2]\!] : a_j \leq u_n$. In this case, we have $u \geq u_n \geq a_j$, so $u \geq a_j$, a contradiction.
\item $\exists i \in [\![1,n_1]\!] : u_n \leq b_i$. 
\end{itemize}
Since $[\![1,n_1]\!]$ is finite and $\mathbb{N}$ is infinite, there exists $i \in [\![1,n_1]\!]$ and a subsequence $(u_{\phi(n)})_{n \in \mathbb{N}}$ where $\phi : \mathbb{N} \to \mathbb{N}$ is a strictly increasing sequence, such that
\[ \forall n \in \mathbb{N} : u_{\phi(n)} \leq b_i \]
Since $(u_n)_{n \in \mathbb{N}}$ is increasing, it follows that
\[ \forall n \in \mathbb{N} : u_n \leq b_i \]
Passing to the supremum, we have $u \leq b_i$, which is a contradiction. \\
Therefore, there exists $N \in \mathbb{N}$ such that $u_n \in V$. Since the interval topology is locally convex and $(u_n)_{n \in \mathbb{N}}$ increases to $u$, it follows that 
\[ \forall n \geq N : u_n \in V. \]
\item Let us prove that $u$ is the unique limit of $(u_n)_{n \in \mathbb{N}}$. Let $u'$ be another limit of $(u_n)_{n \in \mathbb{N}}$, and suppose that $u' \neq u$. \\
Since $u' \neq u$, we have $\neg(u' \leq u)$ or $\neg(u \leq u')$. \\
Suppose by contradiction that $\neg(u' \leq u)$. Then the open subset
\[ V' = P \setminus ]-\infty,u] \]
is a neighborhood of $u'$. Since $u'$ is a limit of $(u_n)_{n \in \mathbb{N}}$, there exists $N \in \mathbb{N}$ such that for all $n \geq N$, $u_n \in V'$, that is, $\neg(u_n \leq u)$, a contradiction. Therefore, $\neg(u \leq u')$. \\
Suppose that $u_n \leq u'$ for an infinite number of indices $n \in \mathbb{N}$. Since $(u_n)_{n \in \mathbb{N}}$ increases, it follows that 
\[ \forall n \in \mathbb{N} : u_n \leq u' \]
which implies that $u \leq u'$, which is a contradiction. Since $\mathbb{N}$ is infinite, there exists therefore $m \in \mathbb{N}$ such that $\neg(u_m \leq u')$. Then the open subset
\[ V'' = P \setminus [u_m,+\infty[ \]
is a neighborhood of $u'$. Since $u'$ is a limit of $(u_n)_{n \in \mathbb{N}}$, there exists $N \in \mathbb{N}$ such that for all $n \geq N$, $u_n \in V''$, that is, $\neg(u_m \leq u_n)$. Taking $n := \max(N,m)$, we see that this is a contradiction. \\
Hence the limit of $(u_n)_{n \in \mathbb{N}}$ is unique.
\end{itemize}
\end{proof}

\subsection{Totally ordered groups or rings and topology}

The following proposition is proved in \cite{Olivos-Soto-Mansilla} and has been known perhaps even before that date. A minor reformulation of the proof is proposed below.

\begin{proposition}
\label{proposition-totally-ordered-group-topological-group}
Let $(G,\leq)$ be a totally ordered group that we endow with the order topology $\tau$. Then $(G,\tau)$ is a topological group.
\end{proposition}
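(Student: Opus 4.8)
The plan is to build on the proposition immediately preceding this one. Since the order $\leq$ is total, Frink's interval topology and the order topology coincide, so by that proposition $(G,\tau)$ is already a locally convex quasi-topological group. In particular, inversion $x\mapsto x^{-1}$ is continuous and the left and right translations $L_a, R_a$ are homeomorphisms; consequently so is every conjugation $c_a = L_a\circ R_{a^{-1}}$, $x\mapsto axa^{-1}$. What remains, and what distinguishes a topological group from a merely quasi-topological one, is to promote the separate continuity of the product to \emph{joint} continuity of $m\colon (x,y)\mapsto xy$.

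First I would reduce joint continuity to joint continuity at the single point $(1,1)$. Given a neighborhood $W$ of $x_0 y_0$, the set $(x_0 y_0)^{-1}W$ is a neighborhood of $1$; assuming continuity at $(1,1)$ there are neighborhoods $A,B$ of $1$ with $AB\subseteq (x_0 y_0)^{-1}W$. Setting $U:=x_0 y_0 A y_0^{-1}$ (a neighborhood of $x_0$, since $z\mapsto x_0 y_0 z y_0^{-1}$ is a homeomorphism fixing no issue as it sends $1$ to $x_0$) and $V:=y_0 B$ (a neighborhood of $y_0$), the computation $xy = x_0 y_0\, a\, y_0^{-1}\, y_0\, b = x_0 y_0 ab$ for $x=x_0 y_0 a y_0^{-1}\in U$, $y=y_0 b\in V$ shows $UV\subseteq x_0 y_0(AB)\subseteq W$. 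This is the standard topological-group reduction and is routine here.

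Next I would establish joint continuity at $(1,1)$ using the order. A basic neighborhood of $1$ may be taken to be an open interval $(a,b)$ with $a<1<b$ (rays are handled the same way, only more easily). The key tool is strict monotonicity of translations, which follows from compatibility of the order together with cancellation: $y>z\Rightarrow xy>xz$ and $yx>zx$. Hence for $x\in(s,t)$ and $y\in(u,v)$ with $s,u<1<t,v$ one obtains $su<xy<tv$ (lower bound: $x>s\Rightarrow xy>sy$, then $y>u\Rightarrow sy>su$; the upper bound is symmetric). Thus it suffices to produce $s,u<1$ with $su\geq a$ and $t,v>1$ with $tv\leq b$, for then $U:=(s,t)$ and $V:=(u,v)$ satisfy $UV\subseteq(a,b)=W$.

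The main obstacle is exactly this last existence step, because $G$ need not be densely ordered and square roots need not exist, so one cannot naively ``split the distance.'' I would argue by cases on whether the interval $(a,1)$ is empty. If it is nonempty, choose any $u\in(a,1)$ and put $s:=au^{-1}$; then $s<1$ (because $u>a$) and $su=a$, as required. If $(a,1)$ is empty, then $a$ is the immediate predecessor of $1$, so every element of the open neighborhood $(a,+\infty)$ of $1$ is $\geq 1$; restricting $U,V$ to lie in $(a,+\infty)$ forces $xy\geq 1>a$ directly. The upper bound involving $b$ is treated symmetrically, using $1$ and its possible immediate successor. Intersecting the lower and upper constructions yields neighborhoods $U,V$ of $1$ with $UV\subseteq W$, which establishes continuity at $(1,1)$ and, with the reduction above, completes the proof.
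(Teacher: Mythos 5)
Your proof is correct and follows essentially the same route as the paper's: both establish joint continuity of multiplication by splitting a basic-neighborhood constraint into a product of two one-sided constraints, with the same case distinction on whether the relevant open interval is empty (to cope with gaps in a non-densely-ordered group). The only organizational difference is that you first reduce to continuity at $(1,1)$ via translations, whereas the paper performs the splitting directly at an arbitrary point $(e,f)$.
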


\begin{proof}
Consider an elementary open subset $V$ of $G$ of the form 
\[ (\bigcap_{i \in [[1,u]]} ]a_i,+\infty[) \cap (\bigcap_{j \in [[1,v]]} ]-\infty,b_j)) \]
containing $ef \in G$, where $e,f \in G$, and without loss of generality, $u,v \in \{0,1\}$, because $G$ is totally ordered.
For all $i \in [[1,u]]$, we have $a_i < ef$, and thus, $e^{-1}a_i < f$. 
\begin{itemize}
\item If $]e^{-1}a_i,f[ \neq \emptyset$, let $\alpha^2_i \in ]e^{-1}a_i,f[$ and $\alpha^1_i = a_i (\alpha^2_i)^{-1}$.
\item If $]e^{-1}a_i,f[ = \emptyset$, let $\alpha^2_i = e^{-1}a_i$ and $\alpha^1_i = a_i f^{-1}$.
\end{itemize}
Similarly, for all $j \in [[1,u]]$, $ef < b_j$, and thus, $e^{-1}b_j > f$. 
\begin{itemize}
\item If $]f,e^{-1}b_j[ \neq \emptyset$,  let $\beta^2_j \in ]f,e^{-1}b_j[$ and $\beta^1_j = b_j (\beta^2_j)^{-1}$.
\item If $]f,e^{-1}b_j[ = \emptyset$, let $\beta^2_j = e^{-1}b_j$ and $\beta^1_j = b_j f^{-1}$.
\end{itemize}
Then if
\[ W_1 := (\bigcap_{i \in [[1,u]]} ]\alpha^1_i,+\infty[) \cap (\bigcap_{j \in [[1,v]]} ]-\infty,\beta^1_j)) \]
and
\[ W_2 := (\bigcap_{i \in [[1,u]]} ]\alpha^2_i,+\infty[) \cap (\bigcap_{j \in [[1,v]]} ]-\infty,\beta^2_j)) \]
we clearly have that
\[ W_1 \cdot W_2 \subseteq V, \]
and that $W_1$ and $W_2$ are open neighborhoods of $e$ and $f$ respectively. \\
Moreover, $V^{-1}$ is clearly an open subset for the order topology containing $(ef)^{-1}$. \\
Therefore, $(G,\tau)$ is a topological group.
\end{proof}

\begin{proposition}
Let $(R,\leq)$ be a totally ordered division ring that we endow with the order topology $\tau$. Then $(R,\tau)$ is a topological ring.
\end{proposition}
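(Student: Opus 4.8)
The additive structure is already handled: since $(R,+,0,\leq)$ is in particular a totally ordered group, Proposition~\ref{proposition-totally-ordered-group-topological-group} applied to it shows that addition and negation are continuous for $\tau$. Thus the only thing left to establish is the joint continuity of the multiplication $\times : R \times R \to R$. The plan is to reduce the order-theoretic formulation of continuity to a familiar ``$\varepsilon$--$\delta$'' estimate by means of the absolute value $|a| := \max(a,-a)$, which is well defined precisely because $\leq$ is total.

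First I would record the elementary algebraic properties of $|\cdot|$, all provable directly from the compatibility axioms of a totally ordered division ring: nonnegativity ($|a| \geq 0$, with equality iff $a=0$), the triangle inequality $|a+b| \leq |a|+|b|$, and multiplicativity $|ab| = |a|\,|b|$ (this last one by the usual four-case sign analysis, using that $0 \leq a$ and $0 \leq b$ imply $0 \leq ab$). I would also prove the bridge between the two languages: because the order is total, every $\tau$-neighborhood of a point $p$ contains a symmetric interval $]p-\delta,p+\delta[ \,=\, \{x \in R : |x-p| < \delta\}$ for some $\delta > 0$, and conversely each such symmetric interval is $\tau$-open. Continuity of $\times$ at $(e,f)$ then amounts to showing that for every $\delta > 0$ there exists $\delta' > 0$ such that $|x-e| < \delta'$ and $|y-f| < \delta'$ together imply $|xy-ef| < \delta$.

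The estimate itself comes from writing $x = e+s$, $y = f+t$ and expanding
\[ xy - ef = et + sf + st, \]
so that by the triangle inequality and multiplicativity
\[ |xy - ef| \leq |e|\,|t| + |s|\,|f| + |s|\,|t|. \]
I would then choose $\delta'$ as the minimum of finitely many strictly positive quantities so that each of the three summands is at most $\delta/4$: for the first two I use the inverses $|e|^{-1}$ and $|f|^{-1}$ available in the division ring (when $e,f \neq 0$), and for the cross term $|s|\,|t|$ I impose $\delta' \leq 1$ together with $\delta' \leq \delta/4$, so that $|s|\,|t| \leq \delta'\cdot\delta' \leq \delta' \leq \delta/4$, thereby avoiding any appeal to square roots. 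The three bounds sum to $3\delta/4 < \delta$, and the desired neighborhoods are the symmetric intervals $W_1 = \{x : |x-e| < \delta'\}$ and $W_2 = \{y : |y-f| < \delta'\}$, which satisfy $W_1 W_2 \subseteq {]ef-\delta,\, ef+\delta[} \subseteq V$.

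The only delicate point --- and the step I expect to be the real obstacle --- is the existence of such a positive $\delta'$ in a \emph{possibly non-Archimedean} totally ordered division ring, where one cannot rely on any embedding of $\mathbb{Q}$ nor on completeness. The resolution is that totality of the order guarantees that a minimum of finitely many positive elements exists and is again positive, while the division ring structure supplies the inverses $|e|^{-1},|f|^{-1}$ needed to absorb the factors $|e|$ and $|f|$. Since the order is only \emph{separately} compatible with multiplication, I would keep careful track of the side on which each factor sits: for instance $\delta' \leq |e|^{-1}(\delta/4)$ yields $|e|\,\delta' \leq \delta/4$ because $|e|\,|e|^{-1} = 1$, and analogously on the right for $|f|$. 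No further structure is required. Once joint continuity of $\times$ is secured, combining it with the topological-group structure of $(R,+)$ furnished by Proposition~\ref{proposition-totally-ordered-group-topological-group} shows that $(R,\tau)$ is a topological ring.
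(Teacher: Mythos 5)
Your argument is correct, but it takes a genuinely different route from the paper's. The paper invokes its group-level result (Proposition \ref{proposition-totally-ordered-group-topological-group}) twice: once for $(R,+)$, and once for the multiplicative group $R'$ of strictly positive elements, noting that the order topology of $(R',\leq_{|R'})$ coincides with the subspace topology because $R'$ is an interval; this settles joint continuity of multiplication on $R'\times R'$, is extended by sign symmetry to the other open ``quadrants'', and the degenerate points $(a,0)$, $(0,b)$ and $(0,0)$ are then handled by hand with explicit interval estimates. Your proof replaces that case split by the single uniform estimate $|xy-ef|\leq |e|\,|t|+|s|\,|f|+|s|\,|t|$, at the cost of first developing the absolute value, its triangle inequality and its multiplicativity; since the paper's boundary cases are themselves $\varepsilon$--$\delta$ computations of exactly the kind you run globally, the reduction to the group result saves little, and your version is arguably the cleaner of the two. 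Two points you should make explicit in a full write-up, though neither is a gap: (i) $1>0$ holds automatically in a totally ordered division ring (as $1=(\pm 1)^2$ and $1\neq 0$), so $R$ has characteristic $0$, $4\cdot 1$ is a positive central invertible element, and $\delta/4$ is well defined, positive, and unambiguous as to side; (ii) strict inequalities survive left or right multiplication by a strictly positive element because a division ring has no zero divisors, which is what makes $3\delta/4<\delta$ and the absorption steps $|e|\,\delta'\leq \delta/4$ legitimate. Your remark about tracking the side of each factor (left multiplication by $|e|$, right multiplication by $|f|$) is exactly the care required, and your handling of the cross term via $\delta'\leq\min(1,\delta/4)$ correctly avoids any appeal to square roots or to the Archimedean property.
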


\begin{proof}
$(R,+,0,\tau)$ is a topological group according to the previous result. Let $R'$ be the set of strictly positive elements of $R$. Then $(R',\cdot, 1,\leq_{|R'})$ is a totally ordered group. According to the previous result, this is also a topological group for the order topology associated with the restricted order $\leq_{|R'}$. Since $R'$ is an interval in $R$, this topology of $R'$ is the subspace topology inherited from the order topology of $R$. Thus multiplication on $R$ is continuous at each point of $R' \times R'$ and the inverse function is continuous at each point of $R'$. Clearly, these two maps can be easily shown to be continuous on strictly negative numbers as well. The remaining problem is to show that multiplication is continuous at each $(x,0)$ and $(0,y)$. \\
Let us show that multiplication is continuous at $(a,0)$ when $a > 0$. \\
Let $W$ be a neighborhood of $0$ in $R$. \\
Let $\epsilon > 0$ be small enough so that $]-\epsilon,\epsilon[ \subseteq W$. \\
Let $\eta > 0$ be small enough so that $a-\eta > 0$, and let $V_1 = ]a-\eta,a+\eta[$ and $V_2 = ]-(a+\eta)^{-1}\epsilon,(a+\eta)^{-1}\epsilon[$ \\
Let $x \in V_1$ and $y \in V_2$. Let us prove that $xy \in ]-\epsilon,\epsilon[$. First, note that $x > a-\eta > 0$. Now,
\begin{itemize}
\item If $y > 0$, then $xy > 0 > -\epsilon$ and $xy < (a+\eta)(a+\eta)^{-1}\epsilon = \epsilon$.
\item If $y \leq 0$, then $-(a+\eta)^{-1}\epsilon < y \leq 0$, so that $0 \leq -y < (a+\eta)^{-1}\epsilon$. Then, we have $0 \leq -xy < x (a+\eta)^{-1}\epsilon < (a+\eta) (a+\eta)^{-1}\epsilon = \epsilon$. Therefore $-\epsilon < xy \leq 0 < \epsilon$.
\end{itemize}
Let us now show that multiplication is continuous at $(a,0)$ when $a < 0$. \\
Let $\epsilon > 0$ be small enough so that $]-\epsilon,\epsilon[ \subseteq W$. \\
Let $\eta > 0$ be small enough so that $a+\eta < 0$, and let $V_1 = ]a-\eta,a+\eta[$ and $V_2 = ](a-\eta)^{-1}\epsilon,-(a-\eta)^{-1}\epsilon[$ \\
Let $x \in V_1$ and $y \in V_2$. Let us prove that $xy \in ]-\epsilon,\epsilon[$. First, note that $x < a+\eta < 0$. Now,
\begin{itemize}
\item If $y > 0$, then $xy < 0 < \epsilon$. Moreover, $a - \eta < x < 0$, which implies that  $0 < -x < \eta - a$, so $0 < -xy < (\eta - a)(\eta-a)^{-1}\epsilon < \epsilon$. Hence $-\epsilon < xy < 0 < \epsilon$.
\item If $y \leq 0$, then $(a-\eta)^{-1}\epsilon < y \leq 0$, so $0 \leq -y < -(a-\eta)^{-1}\epsilon$, and since $0 < -x <\eta-a$, we have $0 \leq xy < (\eta-a) (\eta - a)^{-1}\epsilon = \epsilon$. Hence $-\epsilon < 0 \leq xy \leq \epsilon$.
\end{itemize}
Similarly, multiplication is continuous at $(0,b)$ with $b \in R^*$. \\
It remains to be proven that multiplication is continuous at $(0,0)$. \\
Let $W$ be a neighborhood of $0$ in $R$. \\
Let $\epsilon > 0$ be small enough so that $]-\epsilon,\epsilon[ \subseteq W$. \\
Let $V_1 = ]-1,1[$ and $V_2 = ]-\epsilon, \epsilon[$. Let $x \in V_1$ and $y \in V_2$. \\
Thus, it is clear that $xy \in ]-\epsilon,\epsilon[ \subseteq W$. \\
In conclusion, multiplication is continuous and $(R,\tau)$ is a topological ring.
\end{proof}

\subsection{Hausdorff property for normed groups or rings}

As stated, the following proposition applies to groups. However, it can also be applied to rings. Indeed, notice that we only use the group structure, so the proposition transfers directly to the realm of rings.

\begin{proposition}
Let $(G_2,\times,1,\leq)$ be a totally ordered group endowed with Frink's interval topology. \\
Moreover, let $(G_1,\times,1)$ be a group and $f : (G_1,\times,1) \to (G_2,\times,1,\leq)$ be a norm. \\
We endow $G_1$ with the right topology induced by $f$. \\
Then $G_1$ is Hausdorff.
\end{proposition}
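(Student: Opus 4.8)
The plan is to separate any two distinct points $x,y\in G_1$ by disjoint basic open sets of the form $B_V(x)$ and $B_V(y)$, where the radius-type neighborhood $V$ of the identity of $G_2$ is produced from continuity of the multiplication of $G_2$ rather than from any explicit ``halving'' of a radius. This appeal to continuity is exactly what makes the argument uniform over an arbitrary totally ordered group $G_2$, including cases where $G_2$ admits no square roots or is order-theoretically discrete. Throughout I write $1$ for the identity of $G_2$ (the element the norm axioms denote by $0$), and I use that $f(1_{G_1})=1$, which is what makes each point lie in its own basic neighborhoods $B_V(\cdot)$.

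First I would fix $x\neq y$ and set $c:=f(xy^{-1})$. Since $xy^{-1}\neq 1_{G_1}$, definiteness gives $f(xy^{-1})\neq 1$, while nonnegativity gives $f(xy^{-1})\geq 1$; as the order on $G_2$ is total, these combine to $c>1$. Consequently the subbasic set $]{-\infty},c[\,=\{u\in G_2:u<c\}$ is an open neighborhood of $1$, hence of $1\cdot 1=1$.

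The key step is to manufacture the radius. By Proposition \ref{proposition-totally-ordered-group-topological-group} (together with the fact that the interval and order topologies coincide for a total order), $G_2$ is a topological group, so its multiplication is continuous. Applying continuity at $(1,1)$ to the target neighborhood $]{-\infty},c[$ of $1\cdot 1$ yields an open neighborhood $V$ of $1$ with $V\cdot V\subseteq\,]{-\infty},c[$, that is, $st<c$ for all $s,t\in V$. This is the heart of the proof and the place where I expect the only genuine difficulty to sit: one cannot in general find a $\beta>1$ with $\beta^2\leq c$, but continuity sidesteps this entirely — for instance, if $G_2=\mathbb{Z}$ it simply returns $V=\{1\}$, so the basic neighborhoods below collapse to singletons and the discrete case is recovered automatically.

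Finally I would set $U:=B_V(x)$ and $W:=B_V(y)$. Since $f(1_{G_1})=1\in V$, we have $x\in U$ and $y\in W$. For disjointness, suppose $z\in U\cap W$; then $f(zx^{-1})\in V$ and $f(zy^{-1})\in V$, and by evenness $f(xz^{-1})=f(zx^{-1})\in V$. Submultiplicativity then gives
\[ c=f(xy^{-1})=f\bigl((xz^{-1})(zy^{-1})\bigr)\leq f(xz^{-1})\,f(zy^{-1})\in V\cdot V\subseteq\,]{-\infty},c[, \]
whence $c<c$, a contradiction. Hence $U\cap W=\emptyset$, and $G_1$ is Hausdorff. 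The argument uses only the group structure of $G_1$ and $G_2$, so it transfers verbatim to the ring setting as the paper observes.
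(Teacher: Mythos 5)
Your proof is correct and follows essentially the same route as the paper's: both invoke Proposition \ref{proposition-totally-ordered-group-topological-group} to obtain a neighborhood $V$ of $1$ with $V\cdot V$ contained in a neighborhood excluding $f(xy^{-1})$, and both derive the contradiction from evenness and submultiplicativity applied to a point of $B_V(x)\cap B_V(y)$. The only cosmetic difference is that you use totality to take the excluding neighborhood to be $]{-\infty},f(xy^{-1})[$ directly (making the final sandwich $c\leq\cdot<c$ immediate), whereas the paper picks an arbitrary convex $T_1$-separating neighborhood and invokes convexity; note that both arguments also share the same implicit reliance on $f(1_{G_1})=1$, which the paper's norm axioms do not formally guarantee, to place each point in its own ball.
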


\begin{proof}
Since $G_2$ is totally ordered, it is a topological group in Frink's interval topology by proposition \ref{proposition-totally-ordered-group-topological-group}. Therefore, for any open neighborhood $V$ of $1$ in $G_2$, there exists an open neighborhood $W$ of $1$ in $G_2$ such that $W \cdot W \subseteq V$. \\
Now, let $a \neq b$ in $G_1$. Since $f$ is a norm, we have $f(ab^{-1}) \neq 1$. Since Frink's interval topology is locally-convex and $T_1$, there exists a convex open neighborhood $V$ of $1$ in $G_2$ such that $f(ab^{-1}) \notin V$. Let $W$ be an open neighborhood of $1$ in $G_2$ such that $W \cdot W \subseteq V$. Then
\[ B_W(a) \cap B_W(b) = \{ x \in G_1 : f(xa^{-1}),f(xb^{-1}) \in W \} \subseteq \{ x \in G_1 : f(ab^{-1}) \in V\} = \emptyset \]
since $1 \leq f(ab^{-1}) \leq f(ax^{-1}) \cdot f(xb^{-1}) =  f(xa^{-1}) \cdot f(xb^{-1}) \in W \cdot W \subseteq V$ for any $x \in B_W(a) \cap B_W(b)$, and $V$ is convex. \\
Therefore, $G_1$ is Hausdorff.
\end{proof}

\section{Invertibility in nonassociative rings via oriented geometric series}

The following definition is important in this section.
 
\begin{definition}
Let $(R,+,0,\times,1,\leq)$ be a partially ordered unitary nonassociative ring such that $1 \geq 0$. Let $x > 0$. \\ 
If $\exists y > 0 : xy \geq 1$, we say that $x$ admits a \textbf{right-sup-almost-inverse} $y$. \\ 
If $\exists y > 0 : yx \geq 1$, we say that $x$ admits a \textbf{left-sup-almost-inverse} $y$.
\end{definition}

\subsection{Invertibility in partially ordered nonassociative rings}

The importance of the following theorem resides in the fact that there exist nonobvious examples of rings satisfying its conditions, as stated in example \ref{example-right-invertible} (by nonobvious, we mean different from $\mathbb{R}$ and $\mathbb{Z}$). 

\begin{theorem}
\label{theorem-right-invertible}
Let $(R,+,0,\times,1,\leq)$ be a partially ordered nonassociative ring with unit. \\
For all $x \in R$, define by induction $x^{\to 0} = 1$ and $x^{\to n+1} = xx^{\to n}$. \\
Suppose that :
\begin{enumerate}
\item $1 \geq 0$.
\item $R$ is monotone $\sigma$-complete.
\item Any $x \in ]0,1]$ has a right-sup-almost-inverse.
\item $\forall x \in [0,1[ : \inf\limits_{n \in \mathbb{N}} \{x^{\to n}\} = 0$.
\end{enumerate}
Then we have: 
\begin{itemize}
\item $\forall x \in [0,1[ : \sup\limits_{n \in \mathbb{N}} \{\sum\limits_{k=0}\limits^n x^{\to k}\}$ exists, which we denote by $\sum\limits_{n \in \mathbb{N}} x^{\to n}$ since it is an actual limit (see lemma \ref{lemma-increasing-sequence-converges-to-sup-interval-topology}).
\item $\forall x \in [0,1[ : x \sum\limits_{n \in \mathbb{N}} x^{\to n} = \sum\limits_{n \in \mathbb{N}} x^{\to n+1} =: \sum\limits_{n \in \mathbb{N}^*} x^{\to n}$,
\item $\forall x \in ]0,1] : x \text{ admits a right inverse } x_R^{-1} = \sum\limits_{n \in \mathbb{N}} (1-x)^{\to n} = 1 + a \geq 1$ where $a := \sum\limits_{n \in \mathbb{N}^*} (1-x)^{\to n} \geq 0$.
\end{itemize}
\end{theorem}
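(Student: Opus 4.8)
The plan is to handle the three bullets as one connected argument, reducing everything to a single multiplicative identity. First I would record the elementary facts that drive the proof. Because $1 \geq 0$ and $(R^+)^2 \subseteq R^+$, a one-line induction gives $x^{\to k} \geq 0$ for all $x \in [0,1[$ and all $k$, so the partial sums $S_n := \sum_{k=0}^n x^{\to k}$ increase with $n$. Biadditivity of the product yields the key recursion
\[ S_{n+1} = 1 + x S_n, \]
since $\sum_{k=0}^n x^{\to k+1} = x\sum_{k=0}^n x^{\to k} = x S_n$. I would also note the two reductions that compress the work: the third bullet is exactly the second bullet applied to $1-x \in [0,1[$ (valid for $x \in {]0,1]}$, since $x = 1-(1-x)$), and the second bullet $xS = \sum_n x^{\to n+1}$ is equivalent to the single identity $(1-x)S = 1$ once one checks that $\sum_n x^{\to n+1}$ exists and equals $S-1$.

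For existence (the first bullet), the one thing the hypotheses must supply is an upper bound. Here I would use the right-sup-almost-inverse $y > 0$ of $1-x \in {]0,1]}$ given by hypothesis (3): from $(1-x)y \geq 1$ together with $1-x \leq 1$ one reads off both $y \geq 1$ and $1 + xy \leq y$, so $y$ is a ``supersolution'' of the recursion. Monotonicity of left multiplication by $x \geq 0$ then gives $S_n \leq y$ for all $n$ by induction on $n$. Monotone $\sigma$-completeness (hypothesis (2)) produces $S := \sup_n S_n$, and Lemma \ref{lemma-increasing-sequence-converges-to-sup-interval-topology} promotes this supremum to the actual Frink-topology limit, legitimizing the notation $\sum_n x^{\to n}$. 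Applying the same reasoning to the increasing sequence $x S_n = S_{n+1}-1$ shows $\sum_n x^{\to n+1}$ exists and equals $S-1$.

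The heart of the matter is the identity $(1-x)S = 1$. The inequality $(1-x)S \geq 1$ is the easy half: a telescoping computation gives $(1-x)S_n = 1 - x^{\to n+1}$, so $(1-x)S \geq (1-x)S_n = 1 - x^{\to n+1}$ for every $n$, and $\sup_n(1 - x^{\to n+1}) = 1 - \inf_n x^{\to n+1} = 1$ by hypothesis (4). The reverse inequality is where I expect the real obstacle, because the tempting step ``$(1-x)\sup_n S_n = \sup_n (1-x)S_n$'' is precisely the order-continuity of multiplication, which is unavailable in a general partially ordered nonassociative ring. I would circumvent it by a contradiction that exploits the tail $R_m := S - S_m \geq 0$, for which the definition of supremum forces $\inf_m R_m = 0$. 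Suppose $(1-x)S = 1 + c$ with $c > 0$. Distributing over $S = S_m + R_m$ and using $(1-x)S_m = 1 - x^{\to m+1}$ gives $(1-x)R_m = c + x^{\to m+1} \geq c$ for every $m$; but $(1-x)R_m = R_m - xR_m \leq R_m$ since $xR_m \geq 0$. Hence $c \leq R_m$ for all $m$, so $c \leq \inf_m R_m = 0$, contradicting $c > 0$. Therefore $(1-x)S = 1$.

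This identity is equivalent to $xS = S - 1 = \sum_n x^{\to n+1}$, which is the second bullet. Feeding $1-x$ into it yields the third: for $x \in {]0,1]}$ the element $x_R^{-1} := \sum_n (1-x)^{\to n}$ satisfies $x\,x_R^{-1} = (1-(1-x))\sum_n (1-x)^{\to n} = 1$, and $x_R^{-1} = 1 + a$ with $a := \sum_{n \in \mathbb{N}^*}(1-x)^{\to n} = (1-x)\,x_R^{-1} \geq 0$, so $x_R^{-1} \geq 1$. The only genuinely delicate point, to reiterate, is the passage from the finite telescoping identities to $(1-x)S = 1$; the contradiction via $\inf_m R_m = 0$ and $(1-x)R_m \leq R_m$ is what lets us avoid assuming any continuity of the product.
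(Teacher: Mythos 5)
Your proof is correct and follows essentially the same route as the paper's: the partial sums are bounded by the right-sup-almost-inverse of $1-x$, monotone $\sigma$-completeness yields the supremum $S$, and the identity $(1-x)S=1$ is obtained from the telescoping relation $(1-x)S_m = 1 - x^{\to m+1}$ together with hypothesis (4). The only cosmetic difference is that for the inequality $(1-x)S \le 1$ you argue by contradiction through the tails $R_m = S - S_m$, whereas the paper simply observes that $xS$ is an upper bound of the set $\{xS_n\}_{n} = \{S_{n+1}-1\}_{n}$, whose supremum is $S-1$; both variants rest on the same monotonicity facts.
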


\begin{proof}
\begin{itemize}
\item Let $x \in [0,1[$. Let $c > 0$ such that $(1-x)c \geq 1$. Let, for all $n \in \mathbb{N}$, $s_n := \sum\limits_{k=0}\limits^n x^{\to k}$. Let us show by induction that $\forall n \in \mathbb{N} : s_n \leq c$. We have $s_0 = 1 \leq (1-x)c \leq c$. Suppose that we have $s_n \leq c$ for a certain $n \in \mathbb{N}$. Then $s_{n+1} = xs_n + 1 \leq xc + 1 \leq c$.  Hence by monotone $\sigma$-completeness $\sum\limits_{n \in \mathbb{N}} x^{\to n} := \sup\limits_{n \in \mathbb{N}} \{s_n\}$ exists.
\item Let $x \in [0,1[$. Let, for all $n \in \mathbb{N}$, $s_n := \sum\limits_{k=0}\limits^n x^{\to k}$. We have $\forall n \in \mathbb{N} : s_n \leq \sum\limits_{n \in \mathbb{N}} x^{\to n}$. Therefore, $\forall n \in \mathbb{N} : \sum\limits_{k = 0}\limits^{n} x^{\to k+1} = xs_n \leq x \sum\limits_{n \in \mathbb{N}} x^{\to n}$, and so $\sum\limits_{n \in \mathbb{N}} x^{\to n+1} \leq x \sum\limits_{n \in \mathbb{N}} x^{\to n}$ by the definition of the supremum. Conversely, we need to show that $x \sum\limits_{n \in \mathbb{N}} x^{\to n} \leq \sum\limits_{n \in \mathbb{N}} x^{\to n+1} = (\sum\limits_{n \in \mathbb{N}} x^{\to n}) - 1$ (notice that for all bounded above countable subset $A \subseteq R$ and element $a \in R$ we have $\sup(A+a)=\sup(A)+a$). This is equivalent to $(1-x) (\sum\limits_{n \in \mathbb{N}} x^{\to n}) \geq 1$. But we have $\forall m \in \mathbb{N}^* : (1-x) (\sum\limits_{n \in \mathbb{N}} x^{\to n}) \geq 1 - x^m$. Passing to the supremum and using that $\inf\limits_{m \in \mathbb{N}} \{x^{\to m}\} = 0$, we have the desired inequality $(1-x) (\sum\limits_{n \in \mathbb{N}} x^{\to n}) \geq 1$.
\item We have $1 = (1 - (1-x)) \left[ \sum\limits_{n \in \mathbb{N}} (1-x)^{\to n} \right] = xx_R^{-1}$ where $x_R^{-1} = \sum\limits_{n \in \mathbb{N}} (1-x)^{\to n} = 1 + a \geq 1$ where $a := \sum\limits_{n \in \mathbb{N}^*} (1-x)^{\to n} \geq 0$.
\end{itemize}
\end{proof}

\begin{corollary}
\label{corollary-invertible}
Let $(R,+,0,\times,1,\leq)$ be a partially ordered unitary nonassociative ring. \\
For all $x \in R$, define by induction $x^{\to 0} = 1$ and $x^{\to n+1} = xx^{\to n}$. \\
For all $x \in R$, define by induction $x^{\leftarrow 0} = 1$ and $x^{\leftarrow n+1} = x^{\leftarrow n}x$. \\
Suppose that :
\begin{enumerate}
\item $1 \geq 0$,
\item $R$ is monotone $\sigma$-complete,
\item Any $x \in ]0,1]$ has a right-sup-almost-inverse,
\item Any $x \in ]0,1]$ has a left-sup-almost-inverse,
\item $\forall x \in [0,1[ : \inf\limits_{n \in \mathbb{N}} \{x^{\to n}\} = 0$,
\item $\forall x \in [0,1[ : \inf\limits_{n \in \mathbb{N}} \{x^{\leftarrow n}\} = 0$,
\item $\forall x \in ]0,1[ :$ ($x$ is left and right invertible) $\Rightarrow (x_L^{-1} = x_R^{-1})$ (where $x_L^{-1}$ is a left-inverse of $x$, and $x_R^{-1}$ a right inverse),
\end{enumerate}
then for all $x \in ]0,1]$, $x$ is invertible with inverse $\sum\limits_{n \in \mathbb{N}} (1-x)^{\leftarrow n}$.
\end{corollary}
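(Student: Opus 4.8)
The plan is to combine Theorem~\ref{theorem-right-invertible} with its left-handed mirror image, and then use hypothesis~(7) to glue the resulting one-sided inverses into a genuine two-sided inverse.

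First I would simply invoke Theorem~\ref{theorem-right-invertible}, whose hypotheses are exactly conditions (1), (2), (3), (5) of the corollary. It yields, for every $x \in ]0,1]$, a right inverse
\[ x_R^{-1} = \sum_{n \in \mathbb{N}} (1-x)^{\to n}, \qquad x\,x_R^{-1} = 1. \]

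Next I would establish the left-handed analogue of Theorem~\ref{theorem-right-invertible}: under conditions (1), (2), (4), (6), every $x \in ]0,1]$ admits a left inverse $x_L^{-1} = \sum_{n} (1-x)^{\leftarrow n}$ with $x_L^{-1} x = 1$. The proof is the mirror image of the one already given, obtained by systematically replacing left multiplication by right multiplication, the powers $x^{\to n}$ by $x^{\leftarrow n}$, the right-sup-almost-inverse by the left-sup-almost-inverse, and condition (5) by condition (6). Concretely, for $x \in [0,1[$ one takes $c > 0$ with $c(1-x) \geq 1$ (condition (4) applied to $1-x \in ]0,1]$), sets $s_n := \sum_{k=0}^n x^{\leftarrow k}$, and shows by induction that $s_n \leq c$ using $s_{n+1} = 1 + s_n x$ together with the base estimate $1 \leq c(1-x) \leq c$; monotone $\sigma$-completeness then produces $\sum_n x^{\leftarrow n}$. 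The identity $\left(\sum_n x^{\leftarrow n}\right) x = \sum_n x^{\leftarrow n+1}$ follows as before, one inequality from passing to the supremum and the other from the estimate $\left(\sum_n x^{\leftarrow n}\right)(1-x) \geq 1 - x^{\leftarrow m}$ combined with condition (6). This gives $\left(\sum_n (1-x)^{\leftarrow n}\right) x = 1$ for $x \in ]0,1]$. The only points needing care are that the order is two-sidedly compatible and that $(R^+)^2 \subseteq R^+$, so that $a \leq b$ and $x \geq 0$ yield both $xa \leq xb$ and $ax \leq bx$; both hold by the definition of a partially ordered ring, so the mirrored argument is valid in this nonassociative, noncommutative setting.

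Finally I would glue the two one-sided inverses. For $x = 1$ the claim is trivial, since $1-x = 0$ gives $\sum_n 0^{\leftarrow n} = 1$ and $1$ is its own inverse. For $x \in ]0,1[$, both $x_L^{-1}$ and $x_R^{-1}$ exist, and hypothesis (7) forces $x_L^{-1} = x_R^{-1} =: z$. Then $zx = 1$ (because $z = x_L^{-1}$) and $xz = 1$ (because $z = x_R^{-1}$), so $z$ is a two-sided inverse of $x$, equal to $\sum_n (1-x)^{\leftarrow n}$ as required. The main obstacle I anticipate is conceptual rather than computational: in a nonassociative ring a left inverse and a right inverse need neither coincide nor be unique, so producing a two-sided inverse is not automatic. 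Hypothesis (7) is precisely the bridge that forces coincidence on $]0,1[$, and the bulk of the honest work lies in verifying that the left-handed version of Theorem~\ref{theorem-right-invertible} truly goes through verbatim under the mirrored hypotheses (4) and (6) — the order-compatibility checks being the only place where noncommutativity and nonassociativity could interfere.
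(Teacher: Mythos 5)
Your proposal is correct and follows essentially the same route as the paper: obtain the right inverse from Theorem~\ref{theorem-right-invertible}, run the mirror-image argument (with left-sup-almost-inverses and the $x^{\leftarrow n}$ powers) to get the left inverse $\sum_{n}(1-x)^{\leftarrow n}$, and use hypothesis (7) to identify the two. You merely spell out the mirrored induction and the $x=1$ edge case in more detail than the paper, which compresses all of this into ``in an analogous way.''
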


\begin{proof}
In an analogous way to the proof of theorem \ref{theorem-right-invertible}, we have $1 = \left[ \sum\limits_{n \in \mathbb{N}} (1-x)^{\leftarrow n} \right] (1-(1-x)) = x_L^{-1}x$, where $x_L^{-1} = \sum\limits_{n \in \mathbb{N}} (1-x)^{\leftarrow n} \geq 1$. This means that $x$ has also a left inverse $x_L^{-1} = x_R^{-1}$. Therefore $x$ is invertible with unique inverse $\sum\limits_{n \in \mathbb{N}} (1-x)^{\to n} = \sum\limits_{n \in \mathbb{N}} (1-x)^{\leftarrow n}$.
\end{proof}

\begin{remark}
If : 
\begin{enumerate}
\item $1 \geq 0$,
\item $R$ is power-associative,
\item $R$ is monotone $\sigma$-complete,
\item Any $x \in ]0,1]$ has a right-sup-almost-inverse or a left-sup-almost-inverse,
\item $\forall x \in [0,1[ : \inf\limits_{n \in \mathbb{N}} \{x^n\} = 0$,
\end{enumerate}
then for all $x \in ]0,1]$, $x$ is invertible with inverse $\sum\limits_{n \in \mathbb{N}} (1-x)^n$.
\end{remark}

\begin{example}
\label{example-right-invertible}
For instance, theorem \ref{theorem-right-invertible} holds in the rings $\mathbb{R}$ and $\mathbb{Z}$ with their usual total orderings, but also for $\mathbb{R}[X]$ and $\mathbb{Z}[X]$ with the lexicographic partial ordering $P = \sum\limits_{k=0}^d c_kX^k \geq 0$ iff $P=0$ or $c_d > 0$.
\end{example}

\begin{remark}
\textbf{Optimality of the result}: the fact that in $\mathbb{Z}$, 1 is the only invertible element in the interval $]0,+\infty$[, shows that theorem \ref{theorem-right-invertible} is not very far from being optimal : an interval of the form $]0,a]$ with $a \geq 2$ cannot entirely contain invertible elements in a partially ordered ring in general. An interesting question would be to prove that an interval of the form $]0,a]$ with $a > 1$ cannot entirely contain invertible elements in a partially ordered ring in general, because that would be optimal. 
\end{remark}

\begin{remark}
\textbf{Necessity of the hypotheses}: the following examples show the necessity of the hypotheses of theorem \ref{theorem-right-invertible}, or more explicitly, that there exist rings that fail to satisfy one of its conditions and for which the conclusion of the theorem is also false. 
\begin{itemize}
\item Consider $\mathbb{Q}^2$ with the lexicographic order $(a,b) \geq (0,0)$ iff $a>0$ or ($a=0$ and $b\geq0$). This ring is not monotone $\sigma$-complete (also $\inf\limits_{n \in \mathbb{N}}\{x^n\}$ does not exist for $x \in [(0,0),(1,1)[)$, has a sup-almost-inverse for each $x \in ](0,0),(1,1)]$, but does not have inverses for all $x \in ](0,0),(1,1)]$.
\item Obviously, a ring not admitting right-sup-almost-inverses for elements $x \in ]0,1]$ cannot admit right inverses. Consider for instance $\mathbb{R}[X]$ with the antilexicographic partial ordering $P = \sum\limits_{k=v}^d c_kX^k \geq 0$ iff $P=0$ or $c_v > 0$. This ring satisfies all the other conditions.
\item Consider $\mathbb{R}^2$ with the componentwise sum and product, and the partial ordering $(a,b) \geq (0,0)$ iff ($a \geq 0$ and $b \geq 0$). This ring satisfies all the conditions except $\forall x \in [(0,0),(1,1)[ : \inf\limits_{n \in \mathbb{N}} \{x^n\} = 0$.
\end{itemize}
\end{remark}

\subsection{Invertibility in seminormed Hausdorff Cauchy-complete weak-quasi-topological nonassociative rings}

\begin{theorem}
\label{theorem-f(1-x)<1}
Let $(R_1,+,0,\times,1)$ be a unitary nonassociative ring and $(R_2,+,0,\times,1,\leq)$ be a partially ordered unitary nonassociative ring that we endow with the interval topology. \\
We denote the elements of $R_1$ by Latin letters and the elements of $R_2$ by Greek letters. \\
For all $x \in R_1$, define by induction: $x^{\to 0} = 1$ and $\forall n \in \mathbb{N} : x^{\to n+1} = xx^{\to n}$. \\
For all $\xi \in R_2$, define by induction: $\xi^{\to 0} = 1$ and $\forall n \in \mathbb{N} : \xi^{\to n+1} = \xi \xi^{\to n}$. \\
Suppose that :
\begin{enumerate}
\item $1 \geq 0$,
\item $R_2$ is monotone $\sigma$-complete,
\item Any $\xi \in ]0,1]$ has a right-sup-almost-inverse, 
\item $\forall \xi \in [0,1[ : \inf\limits_{n \in \mathbb{N}} \{\xi^{\to n}\} = 0$.
\end{enumerate}
Let $f : (R_1,+,0,\times,1) \to (R_2,+,0,\times,1,\leq)$ be a seminorm. \\
Suppose that $R_1$ is Hausdorff sequentially Cauchy-complete for the topology induced by the seminorm $f$ (we endow $R_2$ with Frink's interval topology). \\
Then for all $x \in R_1$ such that $f(1-x) < 1$, $x$ admits a right-inverse in $R_1$.
\end{theorem}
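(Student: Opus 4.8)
The plan is to mimic the oriented geometric series construction of Theorem \ref{theorem-right-invertible}, but now carried out inside $R_1$ and controlled through $f$ by the corresponding series in $R_2$. Write $a := 1-x$ and $\alpha := f(a) = f(1-x)$; since $f$ is a seminorm we have $\alpha \geq 0$, and by hypothesis $\alpha < 1$, so $\alpha \in [0,1[$. The candidate right-inverse is the limit of the partial sums $S_N := \sum_{n=0}^N a^{\to n}$ in $R_1$, and I expect it to equal $\sum_{n \in \mathbb{N}}(1-x)^{\to n}$.

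First I would establish the pointwise domination $f(a^{\to n}) \leq \alpha^{\to n}$ by induction, using submultiplicativity $f(a a^{\to n}) \leq f(a) f(a^{\to n})$ together with the fact that left multiplication by $\alpha \geq 0$ preserves order (a product of nonnegatives is nonnegative). Since the hypotheses (1)--(4) imposed on $R_2$ are exactly those of Theorem \ref{theorem-right-invertible}, the $R_2$-series $\sigma_N := \sum_{n=0}^N \alpha^{\to n}$ increases to a supremum $\sigma := \sum_{n \in \mathbb{N}} \alpha^{\to n}$, which by Lemma \ref{lemma-increasing-sequence-converges-to-sup-interval-topology} is its genuine limit in Frink's interval topology. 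Because $(R_2,+,0,\leq)$ equipped with the interval topology is a locally convex quasi-topological group, negation and translation are continuous, so $\sigma - \sigma_N \to 0$.

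Next I would show that $(S_N)_N$ is Cauchy in $R_1$. For $M > N$, subadditivity and the domination give $0 \leq f(S_M - S_N) \leq \sum_{n=N+1}^M \alpha^{\to n} = \sigma_M - \sigma_N \leq \sigma - \sigma_N$. Fixing a convex neighborhood $V$ of $0$ in $R_2$ (local convexity), for $N$ large enough $\sigma - \sigma_N \in V$, hence $\sigma_M - \sigma_N \in V$ and then $f(S_M - S_N) \in V$ by convexity and $0 \in V$; this places $S_M - S_N$ in the basic neighborhood $B_V(0)$. Thus $(S_N)_N$ is Cauchy, and the assumed Hausdorff sequential Cauchy-completeness yields a unique limit $S \in R_1$.

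The decisive step is to identify $S$ as a right-inverse. The key observation is that although we do not control $f(x)$, we do have $f(a) = \alpha \leq 1$, so Proposition \ref{proposition-phia-continuous-f(a)-<=-1} makes left multiplication $\phi_a : y \mapsto a y$ continuous. By left distributivity the partial sums telescope: $a S_N = \sum_{n=0}^N a^{\to n+1} = S_{N+1} - 1$. Letting $N \to \infty$, continuity of $\phi_a$ gives $a S_N \to a S$, while $S_{N+1} \to S$ and continuity of translation give $S_{N+1} - 1 \to S - 1$; uniqueness of limits then forces $a S = S - 1$. Finally, distributivity yields $x S = (1-a) S = S - a S = S - (S-1) = 1$, so $S = \sum_{n \in \mathbb{N}}(1-x)^{\to n}$ is the desired right-inverse. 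The main obstacle is precisely this last step: the naive route multiplies by $x$, whose $f$-value is not controlled, so one must instead route the telescoping through $\phi_a$ with $a = 1-x$, for which the hypothesis $f(1-x) < 1$ supplies the needed continuity.
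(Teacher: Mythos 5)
Your proposal is correct and follows essentially the same route as the paper's proof: dominate the partial sums $\sum_{k=0}^{N}(1-x)^{\to k}$ in $R_1$ by the $R_2$-series $\sum_{k=0}^{N}f(1-x)^{\to k}$ furnished by Theorem \ref{theorem-right-invertible}, conclude Cauchyness and hence convergence to a unique limit, and identify that limit as a right-inverse via the telescoping identity and continuity of left multiplication by $1-x$. Your only (welcome) deviation is to invoke Proposition \ref{proposition-phia-continuous-f(a)-<=-1} using $f(1-x)<1$ rather than Proposition \ref{proposition-phia-continuous}, which is in fact the more appropriate citation here since $R_2$ is not assumed to be an associative division ring in this theorem.
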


\begin{proof}
By assumption, we have $0 \leq f(1-x) < 1$. Therefore, by theorem \ref{theorem-right-invertible}, $\sum\limits_{n \in \mathbb{N}} f(1-x)^{\to n}$ exists. Let $v_n = \sum\limits_{k = 0}^n f(1-x)^{\to k}$ for all $n \in \mathbb{N}$. Since the sequence of remainders of the series $\sum\limits_{k = 0}^n f(1-x)^{\to k}$ is non-negative and decreasing to $0$, it is easily seen that $(v_n)_{n \in \mathbb{N}}$ is a Cauchy sequence in $R_2$ for the interval topology. \\
Let $u_n = \sum\limits_{k = 0}^n (1-x)^{\to k}$ for all $n \in \mathbb{N}$. By the triangle inequality and submultiplicativity, $(u_n)$ is a Cauchy sequence in $R_1$ for the topology induced by the seminorm $f$. Since $R_1$ is Hausdorff sequentially Cauchy-complete for this topology, $(u_n)_{n \in \mathbb{N}}$ converges to a unique limit, which we denote by $\sum\limits_{n \in \mathbb{N}} (1-x)^{\to n}$. \\
We would like to show that $x$ admits a right-inverse $x_R^{-1} := \sum\limits_{n \in \mathbb{N}} (1-x)^{\to n}$. This follows from the following computation:
\begin{align*}
x x_R^{-1} &= (x-1) x_R^{-1} + x_R^{-1} \\
&= (x-1) \lim_n u_n + x_R^{-1} \\
&= \lim_n (x-1)u_n + x_R^{-1} \\
&= \lim_n (1 - u_{n+1}) + x_R^{-1} \\
&= 1 - x_R^{-1} + x_R^{-1} \\
&= 1
\end{align*}
where we have used the fact that the limit of $(u_n)_{n \in \mathbb{N}}$ is unique and proposition \ref{proposition-phia-continuous}.
\end{proof}

\begin{corollary}
\label{corollary-f(1-x)<1}
Let $(R_1,+,0,\times,1)$ be a unitary nonassociative ring and $(R_2,+,0,\times,1,\leq)$ be a partially ordered unitary nonassociative ring that we endow with the interval topology. \\
We denote the elements of $R_1$ by Latin letters and the elements of $R_2$ by Greek letters. \\
For all $x \in R_1$, define by induction: $x^{\to 0} = 1$ and $\forall n \in \mathbb{N} : x^{\to n+1} = xx^{\to n}$. \\
For all $x \in R_1$, define by induction $x^{\leftarrow 0} = 1$ and $x^{\leftarrow n+1} = x^{\leftarrow n}x$. \\
For all $\xi \in R_2$, define by induction: $\xi^{\to 0} = 1$ and $\forall n \in \mathbb{N} : \xi^{\to n+1} = \xi \xi^{\to n}$. \\
For all $\xi \in R_2$, define by induction: $\xi^{\leftarrow 0} = 1$ and $\forall n \in \mathbb{N} : \xi^{\leftarrow n+1} = \xi \xi^{\leftarrow n}$. \\
Suppose that :
\begin{enumerate}
\item $1 \geq 0$ in $R_2$,
\item $R_2$ is monotone $\sigma$-complete,
\item Any $\xi \in ]0,1]$ has a right-sup-almost-inverse, 
\item Any $\xi \in ]0,1]$ has a left-sup-almost-inverse,
\item $\forall \xi \in [0,1[ : \inf\limits_{n \in \mathbb{N}} \{\xi^{\to n}\} = 0$,
\item $\forall \xi \in [0,1[ : \inf\limits_{n \in \mathbb{N}} \{\xi^{\leftarrow n}\} = 0$,
\end{enumerate}
Let $f : (R_1,+,0,\times,1) \to (R_2,+,0,\times,1,\leq)$ be a seminorm. \\
Moreover, suppose that:
\begin{enumerate}
\item $R_1$ is Hausdorff sequentially Cauchy-complete for the topology induced by the seminorm $f$ (we endow $R_2$ with Frink's interval topology),
\item $\forall x \in R_1$ : ($x$ is left and right invertible) $\Rightarrow (x_L^{-1} = x_R^{-1})$ (where $x_L^{-1}$ is a left-inverse of $x$, and where $x_R^{-1}$ is a right inverse),
\end{enumerate}
Then for all $x \in R_1$ such that $f(1-x) < 1$, $x$ is invertible in $R_1$.
\end{corollary}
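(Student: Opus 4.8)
The plan is to combine Theorem \ref{theorem-f(1-x)<1} with its exact left--right mirror, and then collapse the two resulting one-sided inverses into a single two-sided inverse using the final hypothesis. Fix $x \in R_1$ with $f(1-x) < 1$. First I would observe that items $1$, $2$, $3$, $5$ of the first list of hypotheses, together with the sequential Cauchy-completeness, are verbatim the hypotheses of Theorem \ref{theorem-f(1-x)<1}; applying it directly yields a right inverse $x_R^{-1} = \sum_{n \in \mathbb{N}} (1-x)^{\to n}$, obtained as the limit of the partial sums $u_n = \sum_{k=0}^n (1-x)^{\to k}$.

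The second step is to produce a left inverse by the mirror argument. Setting $u_n' = \sum_{k=0}^n (1-x)^{\leftarrow k}$, submultiplicativity gives $f((1-x)^{\leftarrow n}) \le f(1-x)^{\leftarrow n}$, and items $1$, $2$, $4$, $6$ guarantee (through the left-handed version of the first part of Theorem \ref{theorem-right-invertible}) that the associated series in $R_2$ converges with tails decreasing to $0$. Hence $(u_n')$ is Cauchy in $R_1$ and, by completeness, converges to some $x_L^{-1} = \sum_{n \in \mathbb{N}} (1-x)^{\leftarrow n}$. The identity $x_L^{-1} x = 1$ then follows from the transpose of the computation in the proof of Theorem \ref{theorem-f(1-x)<1}:
\begin{align*}
x_L^{-1} x &= x_L^{-1}(x-1) + x_L^{-1} = \lim_n u_n'(x-1) + x_L^{-1} \\
&= \lim_n (1 - u_{n+1}') + x_L^{-1} = 1 - x_L^{-1} + x_L^{-1} = 1,
\end{align*}
where I have used right-distributivity to get $u_n'(x-1) = 1 - u_{n+1}'$.

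The one genuinely new point---and the step I expect to be the main obstacle---is justifying the interchange $\lim_n u_n'(x-1) = \left( \lim_n u_n' \right)(x-1)$, since propositions \ref{proposition-phia-continuous-f(a)-<=-1} and \ref{proposition-phia-continuous} only assert continuity of the \emph{left} multiplication $\phi_a : y \mapsto ay$, whereas here I need continuity of the \emph{right} multiplication $\psi_a : y \mapsto ya$ with $a = x-1$. I would resolve this by noting that the proof of proposition \ref{proposition-phia-continuous-f(a)-<=-1} transposes without change: whenever $f(a) \le 1$ one has $0 \le f(ya - ra) = f((y-r)a) \le f(y-r) f(a) \le f(y-r)$, so convexity of the basic neighborhoods forces $\psi_a$ to be continuous just as $\phi_a$ was. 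Since $f(x-1) = f(-(1-x)) = f(1-x) < 1 \le 1$, this right-multiplication analogue applies to $a = x-1$ and validates the interchange.

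Finally, with a right inverse $x_R^{-1}$ and a left inverse $x_L^{-1}$ both in hand, the final hypothesis forces $x_L^{-1} = x_R^{-1}$; writing $x^{-1}$ for this common element, we have simultaneously $x x^{-1} = 1$ and $x^{-1} x = 1$, so $x$ is invertible in $R_1$. This appeal is essential: in a nonassociative ring the usual collapse $x_L^{-1} = x_L^{-1}(x x_R^{-1}) = (x_L^{-1} x) x_R^{-1} = x_R^{-1}$ is blocked by the failure of associativity, which is precisely why the equality of the two one-sided inverses must be imposed as a hypothesis.
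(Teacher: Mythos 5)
Your proof is correct and follows essentially the same route as the paper, which simply mirrors Theorem \ref{theorem-f(1-x)<1} on the left-handed powers and then invokes the hypothesis $x_L^{-1}=x_R^{-1}$ to collapse the two one-sided inverses. You are in fact more explicit than the paper on the one real subtlety --- that the mirrored computation requires continuity of \emph{right} multiplication $y \mapsto y(x-1)$ rather than the left multiplication treated in Propositions \ref{proposition-phia-continuous-f(a)-<=-1} and \ref{proposition-phia-continuous} --- and your transposition of the argument of Proposition \ref{proposition-phia-continuous-f(a)-<=-1} to handle it is valid.
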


\begin{proof}
Analogously to the proof of theorem \ref{theorem-f(1-x)<1}, we have $1 = \left[ \sum\limits_{n \in \mathbb{N}} (1-x)^{\leftarrow n} \right] (1-(1-x)) = x_L^{-1}x$, where $x_L^{-1} = \sum\limits_{n \in \mathbb{N}} (1-x)^{\leftarrow n}$. This means that $x$ also has a left inverse $x_L^{-1} = x_R^{-1}$. Therefore $x$ is invertible with unique inverse $\sum\limits_{n \in \mathbb{N}} (1-x)^{\to n} = \sum\limits_{n \in \mathbb{N}} (1-x)^{\leftarrow n}$.
\end{proof}

\begin{remark}
\label{remark-theorem-2}
If : 
\begin{enumerate}
\item $1 \geq 0$ in $R_2$,
\item $R_1$ and $R_2$ are power-associative,
\item $\forall \xi \in [0,1[ : \inf\limits_{n \in \mathbb{N}} \{\xi^n\} = 0$,
\item Any $\xi \in ]0,1]$ has a right-sup-almost-inverse or a left-sup-almost-inverse,
\item $R_1$ is Hausdorff sequentially Cauchy-complete for the topology induced by the seminorm $f$ (we endow $R_2$ with Frink's interval topology),
\end{enumerate}
then for all $x \in ]0,1]$, $x$ is invertible with inverse $\sum\limits_{n \in \mathbb{N}} (1-x)^n$.
\end{remark}

\subsection{Two additional independent lemmas}

\begin{lemma}
Let $(R,+,0,\times,1,\leq)$ be an Archimedean partially ordered unitary nonassociative ring of characteristic $0$ such that $1 > 0$. Let $x > 0$ such that
\[ \forall y \in \mathbb{N}x : y \text{ is comparable to } 1. \]
Then $x$ has a left-and-right-sup-almost-inverse $y$.
\end{lemma}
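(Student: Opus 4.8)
The plan is to reduce the statement to producing a single natural number $n \geq 1$ with $nx \geq 1$, and then to exhibit $y := n \cdot 1$ (the $n$-fold sum of the unit) as the desired left-and-right-sup-almost-inverse. Indeed, by biadditivity of the product and the fact that $1$ is a two-sided unit, $xy = x(n\cdot 1) = n(x\cdot 1) = nx$ and likewise $yx = (n\cdot 1)x = nx$, so both $xy \geq 1$ and $yx \geq 1$ hold as soon as $nx \geq 1$; nonassociativity is harmless here since only distributivity is invoked. It remains to check $y > 0$, which follows from $1 > 0$: strict compatibility of the order with addition gives $0 < 1 < 2\cdot 1 < \cdots$, so $n \cdot 1 > 0$ for every $n \geq 1$ (characteristic $0$ guarantees these multiples are nonzero).

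The crux is therefore to find $n \geq 1$ with $nx \geq 1$, which I would do by contradiction. Suppose no such $n$ exists. For each $n \in \mathbb{N}$, the element $nx$ belongs to $\mathbb{N}x$ and is thus comparable to $1$ by hypothesis; since $nx \geq 1$ fails, comparability forces $nx \leq 1$. For $n = 0$ this is merely $0 \leq 1$, which holds because $1 > 0$.

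Next I would extend this bound to all integer multiples: for $n < 0$, write $nx = -(|n|x)$, and since $x > 0$ we have $|n|x \geq 0$, whence $nx \leq 0 \leq 1$. Combining the two cases gives $nx \leq 1$ for every $n \in \mathbb{Z}$. The Archimedean property of the additive group $(R,+,0,\leq)$, applied with $a = x$ and $b = 1$, asserts precisely that $(\forall n \in \mathbb{Z} : nx \leq 1) \Rightarrow x \leq 0$, contradicting $x > 0$. Hence some $n \geq 1$ with $nx \geq 1$ exists; the value $n = 0$ is automatically excluded since $0 \not\geq 1$ as $1 > 0$.

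I expect the only delicate points to be bookkeeping rather than conceptual: invoking the comparability hypothesis to convert ``$nx \not\geq 1$'' into ``$nx \leq 1$'', and reading the additive form of the Archimedean axiom correctly, where the group identity is $0$ and so the conclusion is $x \leq 0$ rather than $x \leq 1$. Once $n$ is secured, the verification that $y = n \cdot 1$ is a genuine two-sided sup-almost-inverse is immediate from distributivity.
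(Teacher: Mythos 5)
Your proposal is correct and follows the same route as the paper: the paper's proof is precisely ``by the Archimedean property there exists $n \in \mathbb{N}$ with $nx \geq 1$, take $y = n\cdot 1 > 0$.'' You have merely filled in the details the paper leaves implicit, namely that the comparability hypothesis is what converts $\neg(nx \geq 1)$ into $nx \leq 1$ so that the (additively written) Archimedean axiom applies, and that distributivity alone gives $x(n\cdot 1) = (n\cdot 1)x = nx$.
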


\begin{proof}
Indeed, for any such $x$, there exists by the Archimedean property an $n \in \mathbb{N}$ such that $nx \geq 1$, and we can take $y = \underbrace{n.1}_{> 0} \in R$.
\end{proof}

\begin{lemma}
Let $(R,+,0,\times,1,\leq)$ be a monotone $\sigma$-complete partially ordered associative ring. \\
Then for any invertible element $x \in R$ such that $x \in ]0,1[$ and $1-x$ is not a zero divisor, we have $\inf\limits_{n \in \mathbb{N}} \{x^n\} = 0$.
\end{lemma}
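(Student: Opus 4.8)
The plan is to show that $(x^{n})_{n\in\mathbb{N}}$ decreases to an infimum $m$, that $m$ is a fixed point of multiplication by $x$, and then to cancel the nonzero non-zero-divisor $1-x$. First I would record the order-theoretic setup. Since $x\in{]0,1[}$ we have $0<x<1$, hence $1>0$ and $1-x>0$ with $1-x\neq 0$. Associativity makes the powers $x^{n}$ unambiguous, and an easy induction using $(R^{+})(R^{+})\subseteq R^{+}$ gives $x^{n}\geq 0$ for all $n$; since $x^{n}-x^{n+1}=(1-x)x^{n}\geq 0$, the sequence $(x^{n})_{n\in\mathbb{N}}$ is decreasing and bounded below by $0$. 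Applying monotone $\sigma$-completeness to the increasing, bounded-above sequence $(-x^{n})_{n\in\mathbb{N}}$, the supremum $\sup_{n}(-x^{n})$ exists, so $m:=\inf_{n} x^{n}=-\sup_{n}(-x^{n})$ exists and satisfies $m\geq 0$. (By the order-dual of Lemma \ref{lemma-increasing-sequence-converges-to-sup-interval-topology}, $(x^{n})$ in fact converges to $m$ in the interval topology, which I may invoke if a topological formulation is preferred.)

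The heart of the argument is the identity $xm=m$. One inequality is immediate: from $m\leq x^{n}$ for every $n$ and $x\geq 0$ we get $xm\leq x\,x^{n}=x^{n+1}$, so $xm$ is a lower bound of the tail $\{x^{n}:n\geq 1\}$, whose infimum is again $m$; hence $xm\leq m$. For the reverse inequality I would use invertibility of $x$: the translation $\mu_{x}:a\mapsto xa$ is a bijection of $R$ (with inverse $\mu_{x^{-1}}$, using associativity), it is order preserving because $x\geq 0$, and its inverse is order preserving as soon as $x^{-1}\geq 0$; in that case $\mu_{x}$ is an order automorphism, hence preserves infima, and $xm=x\inf_{n}x^{n}=\inf_{n}x^{n+1}=m$ follows at once. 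Note that $x^{-1}\leq 0$ is impossible, since it would force $-1=x\cdot(-x^{-1})\geq 0$, contradicting $1>0$; so the only delicate point is to rule out incomparability of $x^{-1}$ with $0$, equivalently to promote the elementary bound $xm\leq m$ to an equality.

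With $xm=m$ in hand the conclusion is quick: $(1-x)m=m-xm=0$, and since $1-x\neq 0$ is not a zero divisor, this forces $m=0$, that is, $\inf_{n}x^{n}=0$.

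I expect the main obstacle to be precisely the reverse inequality $m\leq xm$, i.e. showing that multiplication by the positive invertible element $x$ preserves the infimum of $(x^{n})$ (equivalently, that $x^{-1}\geq 0$). This is where invertibility of $x$ and the constraint $x<1$ must be used together: the entrywise order on $2\times 2$ real matrices shows that a positive invertible element can have a non-positive inverse once $x\not<1$, so the restriction to ${]0,1[}$ is genuinely essential and any correct argument must exploit it (for instance through the telescoping relation $(1-x)\sum_{k=0}^{n}x^{k}=1-x^{n+1}$ combined with monotone $\sigma$-completeness) rather than taking $x^{-1}\geq 0$ for granted.
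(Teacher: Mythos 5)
Your setup matches the paper's argument step for step: the existence of $a:=\inf_{n\in\mathbb{N}}\{x^{n}\}$ (your $m$) from monotone $\sigma$-completeness applied to the increasing bounded sequence $(-x^{n})_{n}$, the easy inequality $xa\leq a$, and the endgame $xa=a\Rightarrow(1-x)a=0\Rightarrow a=0$ via the non-zero-divisor hypothesis. But the proposal never actually establishes the reverse inequality $a\leq xa$: you state that it would follow if $x^{-1}\geq 0$, concede that you cannot rule out $x^{-1}$ being incomparable to $0$, and gesture at a repair via the telescoping identity $(1-x)\sum_{k=0}^{n}x^{k}=1-x^{n+1}$ without carrying it out (and it is not clear it can be carried out under the stated hypotheses, since nothing guarantees the partial sums $\sum_{k=0}^{n}x^{k}$ are bounded above, and $1-x$ is only assumed not to be a zero divisor, not invertible). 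A proof whose central step is announced as an unresolved obstacle is incomplete, so this is a genuine gap.

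For comparison, the paper closes exactly this step by asserting $x^{-1}>0$ and using it directly: from $a\leq x^{k+1}$ for all $k$ it multiplies by $x^{-1}>0$ to get $x^{-1}a\leq x^{k}$, so $x^{-1}a$ is a lower bound of $\{x^{k}\}$ and hence $x^{-1}a\leq a$; multiplying back by $x\geq 0$ gives $a\leq xa$. In other words, the paper takes for granted precisely the positivity of the inverse that you refuse to assume, and offers no derivation of $x^{-1}>0$ from ``$x$ invertible and $x\in{]0,1[}$'' in a general monotone $\sigma$-complete partially ordered ring. So your diagnosis of where the difficulty lies is accurate, and your caution is arguably better justified than the paper's one-line dismissal; but as submitted, your attempt proves strictly less than the paper's, and to complete it you must either prove $x^{-1}>0$ under these hypotheses or add it as an explicit assumption.
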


\begin{proof}
Indeed, let $x \in R$ be an invertible element such that $x \in ]0,1[$ and $1-x$ is not a zero divisor. We have $\forall m \in \mathbb{N} :  \inf_{n \in \mathbb{N}} \{x^n\} \leq x^{m+1}$, and thus, since $x$ is invertible in $R$ and $x^{-1} > 0$, $\forall m \in \mathbb{N} : x^{-1} \inf_{n \in \mathbb{N}} \{x^n\} \leq x^{m}$, which implies that $x^{-1} \inf_{n \in \mathbb{N}} \{x^n\} \leq \inf_{n \in \mathbb{N}} \{x^n\}$, and so $\inf_{n \in \mathbb{N}} \{x^n\} \leq x \inf_{n \in \mathbb{N}} \{x^n\}$. Let $a := \inf\limits_{n \in \mathbb{N}} \{x^n\}$. We have $a \leq xa \leq a$. Therefore $xa = a$, which is equivalent to $(x-1)a=0$ and so $a = 0$ since $1-x$ is not a zero divisor.
\end{proof}

\section*{Conflict of interest}
On behalf of all authors, the corresponding author states that there is no conflict of interest.  

\section*{Data availability statement}
No data are associated with this article. \\ \\

\nocite{*}
\bibliographystyle{plain}
\bibliography{references}

\Addresses

\end{document}